\documentclass[11pt]{article}

\usepackage{amsmath,amssymb,amsthm,mathtools,bm}
\usepackage{geometry}
\usepackage{enumitem}
\usepackage{mathrsfs}
\usepackage{graphicx}
\usepackage{placeins}
\usepackage{float}
\usepackage{booktabs}
\usepackage[sort,nocompress,noadjust,space]{cite}
\usepackage{hyperref}
\hypersetup{hidelinks, pdfkeywords={planar conductivity problem, multiple inclusions, Faber-Walsh polynomials, generalized polarization tensors, Neumann-Poincare operator}}
\newcommand{\RR}{\mathbb{R}}
\newcommand{\CC}{\mathbb{C}}
\newcommand{\ZZ}{\mathbb{Z}}
\newcommand{\NN}{\mathbb{N}}
\newcommand{\p}{\partial}
\newcommand{\Kcal}{\mathcal{K}}
\newcommand{\Scal}{\mathcal{S}}
\newcommand{\Lcal}{\mathcal{L}}
\newcommand{\Hcal}{\mathcal{H}}
\DeclareMathOperator{\RePart}{Re}
\DeclareMathOperator{\ImPart}{Im}

\newtheorem{theorem}{Theorem}[section]
\newtheorem{lemma}[theorem]{Lemma}
\newtheorem{proposition}[theorem]{Proposition}
\newtheorem{corollary}[theorem]{Corollary}
\theoremstyle{definition}
\newtheorem{definition}[theorem]{Definition}
\newtheorem{remark}[theorem]{Remark}

\title{Faber--Walsh field expansions for the planar conductivity problem with multiple inclusions}
\author{
Doosung Choi\thanks{Department of Mathematics, Applied Mathematics, and Statistics, Case Western Reserve University, Cleveland, OH 44106, USA (doosung.choi@case.edu)}
\and
Mikyoung Lim\thanks{Department of Mathematical Sciences, Korea Advanced Institute of Science and Technology, 291 Daehak-ro, Yuseong-gu, Daejeon 34141, Republic of Korea (mklim@kaist.ac.kr)}
}
\date{}

\begin{document}
\maketitle

\begin{abstract}
We derive an explicit Faber--Walsh representation for the planar conductivity problem with finitely many disjoint inclusions of general shape and independently chosen positive conductivities. Walsh's lemniscatic map gives a conformal coordinate without smallness or wide-separation assumptions. The field expansion is expressed through Faber--Walsh polarization tensors, which we relate to the complex contracted generalized polarization tensors by a triangular change of basis. For analytic interfaces, we prove conformal continuation and absolute and uniform convergence of the Walsh--Grunsky series near the boundary to obtain explicit factorizations of the Neumann--Poincar\'e operator and the Faber--Walsh polarization tensors. These factorizations separate canonical interaction, which can remain nonzero even when all Grunsky coefficients vanish, from conformal deformation, while retaining the componentwise conductivity dependence in a coupled resolvent. For harmonic polynomial incident fields, we prove geometric convergence of outgoing truncations with exact coefficients on compact subsets of fixed exterior branch domains. Numerical comparisons with independent boundary-integral solutions illustrate the representation for heterogeneous, asymmetric, and nearly touching configurations.
\end{abstract}

\medskip
\noindent\textbf{Keywords.}
planar conductivity problem, multiple inclusions, Faber--Walsh polynomials,
generalized polarization tensors, Neumann--Poincar\'e operator.

\medskip
\noindent\textbf{2020 Mathematics Subject Classification.}
Primary 31A25. Secondary 30C20, 31A10, 35J25.

\section{Introduction}
\label{sec:intro}

The planar conductivity problem admits a boundary-integral formulation
in terms of the Neumann--Poincar\'e (NP) operator. The coefficients in
the far-field multipole expansion of the field perturbation, called generalized
polarization tensors (GPTs), contain both geometric and material
information. They have been used for reconstruction and shape description
\cite{AmmariGarnierKangLimYu2014,AmmariKang2004,AmmariKang2007,
AmmariKangLimZribi2012}, target identification
\cite{AmmariBoulierGarnierJingKangWang2014}, and tracking of moving targets
\cite{AmmariBoulierGarnierKangWang2013}. For several inclusions, GPTs also
record interactions between the interfaces \cite{AmmariKangKimLim2005}.
In two dimensions, combining the GPTs using complex monomials gives the
complex contracted generalized polarization tensors (CGPTs), which are
particularly convenient for conformal-mapping methods. For a simply
connected inclusion, the CGPTs determine coefficients of the exterior
Riemann map and give non-iterative reconstruction formulas
\cite{ChoiHelsingKangLim2023,ChoiKimLim2021Shape,KangLeeLim2015}.
Replacing the complex monomials by Faber polynomials leads to the Faber
polynomial polarization tensors (FPTs). In this basis, the NP operator can
be represented through Grunsky coefficients, and the FPTs are the
coefficients of the geometric multipole expansion
\cite{ChoiKimLim2023,JungLim2020,JungLim2021}. This Faber-polynomial
formulation has also been extended to imperfect interfaces \cite{ChoiLim2024}.

GPTs and FPTs also arise in several design and sensing problems. Vanishing
low-order GPTs have been used to enhance near-cloaking
\cite{AmmariKangLeeLim2013}, while FPTs have been used to construct
semi-neutral inclusions of general shape \cite{ChoiKimLim2023}. Related
conformal expansions have been applied to effective properties of composites
and to neutral core--shell structures \cite{CherkaevKimLim2022,LimMilton2020}.
The NP operator also connects this framework to spectral problems arising in
near-field optics \cite{AmmariChoiYu2018}, plasmonic resonance
\cite{JungLim2023}, and electro-sensing \cite{BonnetierTrikiTsou2018}.

The Faber-based formulations described above are built around a single simply
connected inclusion or coated structures generated from one exterior conformal
map. For a union of $N\ge2$ disjoint inclusions, however, the exterior is
$N$-connected and there is no exterior-disk coordinate. The off-diagonal NP
blocks also couple the interfaces. These interactions have been studied from
several directions. Li and Vogelius \cite{LiVogelius2000} and Ammari et al.\
\cite{AmmariBonnetierTrikiVogelius2015} obtained gradient estimates for
composite media with several inclusions. For perfectly conducting or insulating
inclusions, Bao, Li, and Yin \cite{BaoLiYin2010} derived gradient bounds that
depend on the distances between inclusions. Kang et al.\
\cite{KangKimLeeShinYu2016} used a block NP system for inclusions with
different complex conductivities to study solvability and uniform estimates
under conditions on the conductivities.

There are also several approaches for computing the field in a multi-inclusion
geometry. Greengard and Moura \cite{GreengardMoura1994} combined boundary
integral equations with the fast multipole method and the generalized minimal
residual method for multiphase composites. Crowdy, Tanveer, and DeLillo
\cite{CrowdyTanveerDeLillo2016} used a hybrid Fourier--Laurent and
conformal-mapping basis for close-to-touching discs. Other series methods
include complex-potential expansions for multiple elliptical inhomogeneities
\cite{EbrahimiBalintDini2023} and equivalent-inclusion methods for arbitrarily
shaped inhomogeneities \cite{DingEtAl2025}. Our aim here is to obtain an exact
field expansion for general planar inclusions in one conformal coordinate,
while allowing the conductivity to differ from one component to another.

Walsh's lemniscatic mapping theorem \cite{Walsh1956} provides a conformal
coordinate for the multiply connected exterior, and Faber--Walsh (FW)
polynomials \cite{SeteLiesen2017,Walsh1958} provide the corresponding
polynomial basis. S\`ete \cite[Proposition~3.1(4)]{Sete2013} derived the
expansion of FW polynomials in the lemniscatic coordinate that we use below.
S\`ete and Liesen \cite{SeteLiesen2016} construct lemniscatic maps for special
geometries, while Nasser, Liesen, and S\`ete \cite{NasserLiesenSete2016}
compute them by a boundary-integral method. Schiefermayr and S\`ete
\cite{SchiefermayrSete2023} characterize Walsh weights through Green's
functions and determine weights and centers for polynomial pre-images.
Generalized Grunsky operators on multiply connected domains are studied in
\cite{RadnellSchippersStaubach2020}, while Grunsky coefficients have been used
to represent NP operators in \cite{ChoiLimShipman2025,JiKang2023RotSym}.
We use the reciprocal FW expansions directly in the coupled conductivity
problem. We do not identify their coefficient matrix with an abstract
generalized Grunsky operator.

With these tools, we consider $N$ disjoint simply connected inclusions with
positive background and inclusion conductivities. The conductivity may differ
from one inclusion to another, and we make no smallness or wide-separation
assumptions. The resulting moment coefficients are the Faber--Walsh
polarization tensors (FWPTs). For analytic interfaces,
Lemma~\ref{lem:analytic-boundary-traces} establishes the summable coefficient
bound \eqref{eq:Grunsky-decay}, which justifies boundary evaluation of the
reciprocal expansions and the regularized kernel and leads to the NP
factorization \eqref{eq:K-Grunsky} in Theorem~\ref{thm:K-Grunsky}. This formula
separates canonical interaction from the Walsh--Grunsky correction. Even when
the conformal map is the identity and all Grunsky coefficients vanish, the
canonical term need not be zero. In the lemniscatic coordinate, one polynomial
system serves for both incident fields and tested moments on all components,
and the same Walsh--Grunsky coefficient matrix appears in the moment, source,
and NP factorizations. The full boundary resolvent retains both inter-component
coupling and the componentwise material matrix, without assuming that the
material and NP matrices commute.

For $C^{1,\alpha}$ interfaces and harmonic polynomial incident fields,
Theorem~\ref{thm:FW-truncation} gives the geometric outgoing-truncation estimate
\eqref{eq:FW-truncation-estimate} with exact coefficients on compact subsets of
fixed exterior branch domains. A triangular change of polynomial basis relates
the FWPTs to the CGPTs, and the simply connected reduction recovers the
classical Faber--Grunsky formulas. We compare the FW fields with independent
Nystr\"om solutions for heterogeneous, asymmetric, and nearly touching
configurations.

Section~\ref{sec:NP} gives the boundary-integral formulation and the FW
coordinates. Section~\ref{sec:tensors} contains the field expansion, the
truncation estimate, and the boundary factorizations. Numerical examples and
concluding remarks follow in Sections~\ref{sec:numerics} and
\ref{sec:conclusion}.

\section{Formulation for Multiple Conductivity Inclusions}
\label{sec:NP}


\subsection{Boundary Integral and Neumann--Poincar\'e Formulation}

Identifying $x=(x_1,x_2)\in\RR^2$ with $z=x_1+\mathrm{i}x_2\in\CC$, we fix
$N\ge1$ and let $D_1,\ldots,D_N\subset\RR^2\simeq\CC$ be bounded
simply connected domains with $C^{1,\alpha}$ boundaries, $0<\alpha<1$,
whose closures are pairwise disjoint:
\begin{equation}
    \overline{D_i}\cap\overline{D_j}=\varnothing,
    \qquad 1\le i\ne j\le N.
    \label{eq:disjoint}
\end{equation}
We set $D:=\bigcup_{j=1}^N D_j$, $\Gamma_j:=\p D_j$, and
$\Gamma:=\bigcup_{j=1}^N\Gamma_j$. We write
$\widehat\CC:=\CC\cup\{\infty\}$ and
$\Omega:=\widehat\CC\setminus\overline D$. Thus, $\Omega$ is
$N$-connected, with infinity an interior point.

The background conductivity is $\sigma_m>0$, and the conductivity in
$D_j$ is a constant $\sigma_j>0$, with
\[
\sigma_j\ne\sigma_m,\qquad j=1,\ldots,N.
\]
The inclusion conductivities may differ from one another. The conductivity distribution is
\begin{equation}
    \sigma(x)=
    \begin{cases}
        \sigma_j, & x\in D_j,\quad j=1,\ldots,N,\\
        \sigma_m, & x\in\RR^2\setminus\overline D.
    \end{cases}
    \label{eq:sigma}
\end{equation}
For a real-valued function $H$ harmonic in all of $\RR^2$, we consider
\begin{equation}
    \begin{cases}
        \nabla\cdot(\sigma\nabla u)=0,
        & \text{in }\RR^2,\\[3pt]
        u|_{\Gamma_j}^{+}=u|_{\Gamma_j}^{-},
        & j=1,\ldots,N,\\[3pt]
        \sigma_m\p_{\nu_j}u|_{\Gamma_j}^{+}
        =\sigma_j\p_{\nu_j}u|_{\Gamma_j}^{-},
        & j=1,\ldots,N,\\[3pt]
        u(x)-H(x)=O(|x|^{-1}),
        & |x|\to\infty.
    \end{cases}
    \label{eq:conductivity}
\end{equation}
Here, $\nu_j$ points outward from $D_j$, the signs $+$ and $-$ denote
exterior and interior limits, and $\p_{\nu_j}$ is differentiation in
the direction $\nu_j$. Multiplying all conductivities by the same
positive constant leaves this potential problem unchanged.

For $\varphi_j\in H^{-1/2}(\Gamma_j)$, we define
\begin{equation}
    \Scal_{\Gamma_j}[\varphi_j](x)
    :=\frac{1}{2\pi}\int_{\Gamma_j}
       \log|x-y|\,\varphi_j(y)\,ds(y).
    \label{eq:SL}
\end{equation}
Here, $ds$ is arc-length measure. We use the Laplace single-layer
potential and seek
\begin{equation}
    u=H+\sum_{j=1}^N\Scal_{\Gamma_j}[\varphi_j].
    \label{eq:SLansatz}
\end{equation}
For $i,j\in\{1,\ldots,N\}$ and $x\in\Gamma_i$, we define
\begin{equation}
    \Kcal_{i j}^{*}[\varphi_j](x)
    :=\frac{1}{2\pi}\int_{\Gamma_j}
    \frac{(x-y)\cdot\nu_i(x)}{|x-y|^2}
    \varphi_j(y)\,ds(y),
    \label{eq:Kij}
\end{equation}
with principal value when $i=j$. The jump relation is
\begin{equation}
    \p_{\nu_j}\Scal_{\Gamma_j}[\varphi_j]\big|_\pm
    =\left(\pm\tfrac12 I+\Kcal_{jj}^{*}\right)\varphi_j.
    \label{eq:jump}
\end{equation}
Cross potentials have no jump. Consequently,
\[
    \p_{\nu_i}u\big|_\pm
    =\p_{\nu_i}H
      +\left(\pm\tfrac12 I+\Kcal_{ii}^{*}\right)\varphi_i
      +\sum_{\substack{j=1\\j\ne i}}^N
       \Kcal_{i j}^{*}\varphi_j.
\]
Substitution into the flux condition gives
\[
    \frac{\sigma_i+\sigma_m}{2}\varphi_i
    =(\sigma_i-\sigma_m)
      \left(\p_{\nu_i}H+
      \sum_{j=1}^N\Kcal_{i j}^{*}\varphi_j\right).
\]
We define the component contrasts and their vector by
\begin{equation}
    \lambda_j:=\frac{\sigma_j+\sigma_m}{2(\sigma_j-\sigma_m)},
    \qquad \boldsymbol\lambda:=(\lambda_1,\ldots,\lambda_N),
    \qquad |\lambda_j|>\frac12.
    \label{eq:lambda}
\end{equation}
The coupled equations are
\begin{equation}
    (\lambda_i I-\Kcal_{ii}^{*})\varphi_i
    -\sum_{\substack{j=1\\j\ne i}}^N
        \Kcal_{i j}^{*}\varphi_j
    =\p_{\nu_i}H,
    \qquad i=1,\ldots,N.
    \label{eq:block1}
\end{equation}
We introduce
\begin{equation}
    \mathcal K_\Gamma^*:=(\Kcal_{ij}^{*})_{i,j=1}^N,\quad
    \Lambda_\Gamma:=\begin{pmatrix}
        \lambda_1I_{\Gamma_1}&0&0\\
        0&\ddots&0\\
        0&0&\lambda_NI_{\Gamma_N}
    \end{pmatrix},\quad
    \boldsymbol\varphi:=(\varphi_j)_{j=1}^N,\quad
    \boldsymbol g_H:=(\p_{\nu_j}H)_{j=1}^N.
    \label{eq:blockK}
\end{equation}
The operators $\mathcal K_\Gamma^*$ and $\Lambda_\Gamma$ have
$N\times N$ boundary-operator blocks, and $\boldsymbol\varphi$ and
$\boldsymbol g_H$ are $N\times1$ boundary vectors.
The block $I_{\Gamma_j}$ is the identity on the $j$th boundary space.
The system becomes
\begin{equation}
    (\Lambda_\Gamma-\mathcal K_\Gamma^*)\boldsymbol\varphi
    =\boldsymbol g_H,
    \qquad
    \boldsymbol\varphi
    =(\Lambda_\Gamma-\mathcal K_\Gamma^*)^{-1}\boldsymbol g_H.
    \label{eq:blockres}
\end{equation}
For equal inclusion conductivities $\sigma_j=\sigma_0$, this reduces
to the scalar shift $\lambda I-\mathcal K_\Gamma^*$ with
$\lambda=(\sigma_0+\sigma_m)/(2(\sigma_0-\sigma_m))$.

We work on the componentwise zero-charge space
\begin{equation}
    \Hcal^{-1/2}_0(\Gamma)
    :=\prod_{j=1}^N H^{-1/2}_0(\Gamma_j),
    \qquad
    H^{-1/2}_0(\Gamma_j)
    :=\left\{\varphi_j\in H^{-1/2}(\Gamma_j):
       \int_{\Gamma_j}\varphi_j\,ds=0\right\}.
    \label{eq:charge-space}
\end{equation}
Integrals against constants are understood as trace duality pairings.

\begin{proposition}[Invertibility of the transmission operator]
\label{prop:material-invertibility}

Under the assumptions above,
$\Lambda_\Gamma-\mathcal K_\Gamma^*$
is invertible on
$\Hcal^{-1/2}_0(\Gamma)$ and on
$\prod_{j=1}^N L^2_0(\Gamma_j)$.

\end{proposition}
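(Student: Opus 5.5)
We first check that $\Lambda_\Gamma-\mathcal K_\Gamma^*$ maps $\Hcal^{-1/2}_0(\Gamma)$ into itself. For $\varphi_j$ with zero mean, $\int_{\Gamma_i}\Kcal_{ii}^*\varphi_i\,ds=\tfrac12\int_{\Gamma_i}\varphi_i\,ds=0$, because $\Kcal_{ii}[1]=\tfrac12$ on $\Gamma_i$. For $j\ne i$ we have $\int_{\Gamma_i}\Kcal_{ij}^*\varphi_j\,ds=\int_{\Gamma_i}\p_{\nu_i}\Scal_{\Gamma_j}[\varphi_j]\,ds=0$, because $\Scal_{\Gamma_j}[\varphi_j]$ is harmonic in a neighborhood of $\overline{D_i}$. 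The same holds on $\prod_j L^2_0(\Gamma_j)$.

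Next comes Fredholmness. On $C^{1,\alpha}$ curves each diagonal block $\Kcal_{ii}^*$ is compact on $H^{-1/2}(\Gamma_i)$ and on $L^2(\Gamma_i)$, since its kernel is $O(|x-y|^{\alpha-1})$. The off-diagonal blocks have smooth kernels because the closures are disjoint \eqref{eq:disjoint}, so they are compact as well. Hence $\mathcal K_\Gamma^*$ is compact on both spaces. Since all $\lambda_j\ne0$, $\Lambda_\Gamma$ is invertible. Therefore $\Lambda_\Gamma-\mathcal K_\Gamma^*=\Lambda_\Gamma(I-\Lambda_\Gamma^{-1}\mathcal K_\Gamma^*)$ is Fredholm of index zero, and it suffices to prove injectivity. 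Since $L^2_0\subset H^{-1/2}_0$, injectivity on $\Hcal^{-1/2}_0(\Gamma)$ gives both claims.

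For injectivity, which is the main step, we use an energy argument. Suppose $(\Lambda_\Gamma-\mathcal K_\Gamma^*)\boldsymbol\varphi=0$ with $\boldsymbol\varphi\in\Hcal^{-1/2}_0(\Gamma)$. Set $v=\sum_j\Scal_{\Gamma_j}[\varphi_j]$. Reversing the derivation of \eqref{eq:block1} with $H=0$, $v$ solves the transmission problem $\sigma_m\p_{\nu_i}v|^+=\sigma_i\p_{\nu_i}v|^-$ on each $\Gamma_i$ and is continuous across $\Gamma$. Because every $\varphi_j$ has zero total charge, $v=O(|x|^{-1})$ and $\nabla v=O(|x|^{-2})$ at infinity. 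Integrate $\sigma|\nabla v|^2$ over $\RR^2$ by Green's identity on each $D_i$ and on $B_R\setminus\overline D$. The interface terms cancel because $v$ is continuous and the weighted fluxes match, and the term on $\p B_R$ tends to zero. This gives $\int_{\RR^2}\sigma|\nabla v|^2=0$. Since $\sigma>0$, $v$ is constant, and the decay forces $v\equiv0$. Then $\varphi_i=\p_{\nu_i}v|^+-\p_{\nu_i}v|^-=0$ by the jump relation \eqref{eq:jump}.

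The delicate point is justifying Green's identity for $H^{-1/2}$ densities, where the traces are only $H^{1/2}$ and $H^{-1/2}$. We handle it by noting that $v\in H^1_{\mathrm{loc}}$, harmonic off $\Gamma$, and interpreting the normal derivatives as duality pairings. Alternatively, one can argue by regularity: on $C^{1,\alpha}$ boundaries the kernel of a Fredholm operator of this form consists of $L^2$, indeed Hölder, densities. Positivity of all $\sigma_j$ and $\sigma_m$ is exactly what makes the energy argument work, with no commutation or symmetry between $\Lambda_\Gamma$ and $\mathcal K_\Gamma^*$ required.
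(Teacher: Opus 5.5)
Your proposal is correct and follows essentially the same route as the paper: componentwise zero-charge invariance of $\mathcal K_\Gamma^*$, compactness of the diagonal and off-diagonal blocks giving Fredholm index zero, and injectivity via the energy identity for $v=\sum_j\Scal_{\Gamma_j}[\varphi_j]$ followed by the jump relation. Your additional points (the harmonicity argument for the off-diagonal charges, deducing injectivity on $\prod_j L^2_0(\Gamma_j)$ from injectivity on $\Hcal^{-1/2}_0(\Gamma)$, and the duality-pairing reading of Green's identity) only make explicit what the paper leaves implicit.
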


\begin{proof}

The identity
\[
\int_{\Gamma_i}\Kcal_{ij}^*\varphi_j\,ds
=
\frac{\delta_{ij}}{2}
\int_{\Gamma_j}\varphi_j\,ds
\]
shows that $\mathcal K_\Gamma^*$ preserves componentwise zero charge.
Since the diagonal NP blocks are compact on $C^{1,\alpha}$ boundaries
and the off-diagonal blocks are compact by positive separation,
$\Lambda_\Gamma-\mathcal K_\Gamma^*$ is Fredholm of index zero.

It remains to prove injectivity. Suppose $(\Lambda_\Gamma-\mathcal K_\Gamma^*)\boldsymbol\varphi=0$
and set
\[
v=\sum_{j=1}^N\Scal_{\Gamma_j}[\varphi_j].
\]
Then $v$ satisfies the homogeneous transmission problem and
$v(x)=O(|x|^{-1})$ at infinity. Green's identity gives
\[
\sigma_m\int_{\RR^2\setminus\overline D}|\nabla v|^2\,dx
+
\sum_{j=1}^N
\sigma_j\int_{D_j}|\nabla v|^2\,dx
=0.
\]
Since all conductivities are positive, $v$ is constant in each region
and hence vanishes by continuity and decay at infinity. The jump relation
then gives $\varphi_j=0$ for all $j$.

Thus the operator is injective, and Fredholm theory gives invertibility.
The same argument applies on both spaces.
\end{proof}

\begin{lemma}[Componentwise zero charge]
\label{lem:zerocharge}
For $H$ harmonic in all of $\RR^2$, the right-hand side
$\boldsymbol g_H$ and the solution $\boldsymbol\varphi$ of
\eqref{eq:blockres} have zero mean on every $\Gamma_j$.
\end{lemma}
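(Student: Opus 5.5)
The plan has two parts: first show that the data $\boldsymbol g_H$ has zero mean on each component, then transfer the property to $\boldsymbol\varphi$.

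\emph{Zero mean of the data.} Since $H$ is harmonic in all of $\RR^2$, it is in particular harmonic and smooth on $\overline{D_j}$. The divergence theorem on the bounded domain $D_j$ with $C^{1,\alpha}$ boundary gives
\[
\int_{\Gamma_j}\p_{\nu_j}H\,ds=\int_{D_j}\Delta H\,dx=0 .
\]
Hence every entry of $\boldsymbol g_H$ lies in $H^{-1/2}_0(\Gamma_j)$; indeed it lies in $L^2_0(\Gamma_j)$, since $H$ is smooth up to $\Gamma_j$. So $\boldsymbol g_H\in\Hcal^{-1/2}_0(\Gamma)$.

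\emph{Zero mean of the solution.} I will integrate the $i$th equation of \eqref{eq:block1} over $\Gamma_i$ and use the identity from the proof of Proposition~\ref{prop:material-invertibility},
\[
\int_{\Gamma_i}\Kcal_{ij}^*\varphi_j\,ds=\tfrac{\delta_{ij}}2\int_{\Gamma_j}\varphi_j\,ds .
\]
Writing $c_i:=\int_{\Gamma_i}\varphi_i\,ds$, this yields
\[
\bigl(\lambda_i-\tfrac12\bigr)c_i=\int_{\Gamma_i}\p_{\nu_i}H\,ds=0 .
\]
By \eqref{eq:lambda} we have $|\lambda_i|>1/2$, so $\lambda_i\neq\tfrac12$ and therefore $c_i=0$ for every $i$. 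This argument uses only the equation itself, not which space the solution was constructed in. Equivalently, the unique solution supplied by Proposition~\ref{prop:material-invertibility} on $\Hcal^{-1/2}_0(\Gamma)$ coincides with any solution of \eqref{eq:blockres}.

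\emph{The integral identity.} The only nontrivial input is this identity, which I take as stated earlier. If it needs justification, it follows from duality, $\int_{\Gamma_i}\Kcal_{ij}^*\varphi_j\,ds=\langle\varphi_j,\Kcal_{ji}[1]\rangle$, together with the classical facts $\Kcal_{ii}[1]=\tfrac12$ and $\Kcal_{ji}[1]=0$ on $\Gamma_j$ for $j\ne i$. The latter holds because the double-layer potential of the constant $1$ on $\Gamma_i$ vanishes outside $\overline{D_i}$.

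The main obstacle is getting the sign convention right in this identity, since the $\tfrac12$ must enter with the correct sign to produce the factor $\lambda_i-\tfrac12$. Once that sign is confirmed, the rest of the argument is immediate.
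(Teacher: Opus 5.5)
Your proof is correct and is essentially the paper's: the divergence theorem handles $\boldsymbol g_H$, and your integration of \eqref{eq:block1} to obtain $(\lambda_i-\tfrac12)\int_{\Gamma_i}\varphi_i\,ds=0$ is exactly the alternative argument the paper records after its main route (the main route uses the jump relation and flux condition to write $\int_{\Gamma_j}\varphi_j\,ds=(\sigma_j/\sigma_m-1)\int_{D_j}\Delta u\,dx$). The sign issue you flag as the main obstacle is harmless, since $|\lambda_i|>\tfrac12$ rules out both $\lambda_i=\tfrac12$ and $\lambda_i=-\tfrac12$.
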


\begin{proof}
The divergence theorem gives
$\int_{\Gamma_j}\p_{\nu_j}H\,ds=\int_{D_j}\Delta H\,dx=0$.
The normal derivative jump and the flux condition give
\[
    \int_{\Gamma_j}\varphi_j\,ds
    =\left(\frac{\sigma_j}{\sigma_m}-1\right)
       \int_{\Gamma_j}\p_{\nu_j}u\big|_-\,ds
    =\left(\frac{\sigma_j}{\sigma_m}-1\right)
       \int_{D_j}\Delta u\,dx=0,
    \quad j=1,\ldots,N.
\]
Equivalently, we integrate \eqref{eq:block1} and use
$(\lambda_j-1/2)\int_{\Gamma_j}\varphi_j\,ds=0$.
\end{proof}

For the smooth incident fields used below,
$\boldsymbol g_H\in\prod_{j=1}^N L^2_0(\Gamma_j)$.
Proposition~\ref{prop:material-invertibility} and uniqueness imply
$\varphi_j\in L^2(\Gamma_j)\subset L^1(\Gamma_j)$, since each
boundary has finite length. Boundary moments below are therefore
ordinary integrals, or equivalently Sobolev duality pairings.

\paragraph{Block elimination.}
We fix $j$ and list the remaining component indices as
$\mathcal J_j=\{i_1,\ldots,i_{N-1}\}$, in increasing order. We set
$A_i=\lambda_i I_{\Gamma_i}-\Kcal_{ii}^*$ and
\[
\mathcal A_{\mathcal J_j}:=
\begin{pmatrix}
A_{i_1}&-\Kcal_{i_1i_2}^*&\cdots&-\Kcal_{i_1i_{N-1}}^*\\
-\Kcal_{i_2i_1}^*&A_{i_2}&\cdots&-\Kcal_{i_2i_{N-1}}^*\\
\vdots&\vdots&\ddots&\vdots\\
-\Kcal_{i_{N-1}i_1}^*&-\Kcal_{i_{N-1}i_2}^*&\cdots&A_{i_{N-1}}
\end{pmatrix}.
\]
This is an $(N-1)\times(N-1)$ operator-block matrix.
For $N=2$, it consists of the single remaining diagonal block.
We write $\Kcal_{j\mathcal J_j}^*=(\Kcal_{ji}^*)_{i\in\mathcal J_j}$
for the $1\times(N-1)$ block row and
$\Kcal_{\mathcal J_jj}^*$ for the $(N-1)\times1$ block column.
Proposition~\ref{prop:material-invertibility} applied to the remaining
inclusions makes $\mathcal A_{\mathcal J_j}$ invertible. Its Schur
complement and the eliminated equation are
\begin{equation}
\mathcal C_j=A_j-\Kcal_{j\mathcal J_j}^*\mathcal A_{\mathcal J_j}^{-1}\Kcal_{\mathcal J_jj}^*,\qquad
\varphi_j=\mathcal C_j^{-1}\!\left(\p_{\nu_j}H+\Kcal_{j\mathcal J_j}^*\mathcal A_{\mathcal J_j}^{-1}(\p_{\nu_i}H)_{i\in\mathcal J_j}\right).
\label{eq:Schur}
\end{equation}
Here, $\mathcal C_j$ acts on the single boundary space for $\Gamma_j$.
Its invertibility follows from invertibility of the full block operator.

\subsection{Lemniscatic and Faber--Walsh Coordinates}
\label{sec:FW}

Walsh's theorem gives a lemniscatic exterior domain
\begin{equation}
    \Lcal
    :=
    \{w\in\widehat{\CC}:|V(w)|>\mu\},
    \qquad
    V(w):=\prod_{j=1}^N(w-b_j)^{m_j},
    \label{eq:LemDomain}
\end{equation}
where $b_1,\ldots,b_N\in\CC$ are pairwise distinct lemniscatic centers,
$\mu>0$ is the lemniscatic level parameter, and the Walsh weights satisfy
$m_j>0$ and $\sum_{j=1}^Nm_j=1$. These fixed weights are distinct
from the integer polynomial indices $m,n$. Fractional powers
may require local branch choices, but
$|V(w)|=\prod_{j=1}^N|w-b_j|^{m_j}$ is single-valued.
The canonical complement has $N$ components, one containing each center.
These weights describe the geometry and are independent of the
conductivities.
Walsh's theorem gives a unique lemniscatic domain and map after
fixing the translational normalization at infinity. One standard choice
for the forward map is $\Phi(z)=z+O(z^{-1})$.  If only the derivative
normalization is imposed, the lemniscatic coordinate is determined up
to a translation of the $w$-plane.  We fix one such
translational representative and denote its inverse by
\begin{equation}
    \Psi:\Lcal\longrightarrow\Omega,
    \qquad
    \Psi(\infty)=\infty,
    \qquad
    \Psi'(\infty)=1.
    \label{eq:PsiWalsh}
\end{equation}
The map extends continuously to the boundary because
$\Gamma_1,\ldots,\Gamma_N$ are Jordan curves.  In a neighborhood of infinity,
\begin{equation}
    \Psi(w)
    =
    w+a_0+\sum_{k=1}^{\infty}a_kw^{-k}.
    \label{eq:PsiLaurent}
\end{equation}
The Laurent expansion \eqref{eq:PsiLaurent} is local at infinity.
Its coefficients depend on the chosen translational representative.

We fix an admissible Walsh sequence
$\beta_1,\beta_2,\ldots\in\{b_1,\ldots,b_N\}$.  We use the standard Walsh choice
for which, for every closed set
$C\subset\widehat{\CC}\setminus\{b_1,\ldots,b_N\}$, there exist constants
$A_1(C),A_2(C)>0$, independent of $n$, such that
\begin{equation}
    A_1(C)|V(w)|^n
    \le |v_n(w)|
    \le A_2(C)|V(w)|^n,
    \qquad w\in C,\quad n\ge0,
    \label{eq:Walsh-un-bound}
\end{equation}
where
\begin{equation}
    v_0(w):=1,
    \qquad
    v_n(w):=\prod_{q=1}^{n}(w-\beta_q).
    \label{eq:un}
\end{equation}
Such sequences exist by Walsh's construction, as stated in
\cite[Lemma~2.2]{SeteLiesen2017}.  In particular, the centers occur with
the prescribed asymptotic frequencies
\[
    \frac{1}{n}
    \#\{1\leq k\leq n:\beta_k=b_j\}
    \longrightarrow m_j,
    \qquad j=1,\ldots,N,
    \quad n\to\infty,
\]
where $\#$ denotes the number of indices in the indicated set.
With $F_0\equiv1$, the FW polynomials $F_n$,
$n\geq1$, are characterized by
\begin{equation}
    \frac{\Psi'(w)}{\Psi(w)-z}
    =
    \sum_{n=0}^{\infty}
    \frac{F_n(z)}{v_{n+1}(w)}.
    \label{eq:FWgen}
\end{equation}
This series is locally uniformly convergent in the standard FW domain of convergence. In particular, for fixed $z$ in a compact subset of $D$, it is locally uniformly convergent for $w$ in the exterior lemniscatic region sufficiently separated from the corresponding level set \cite{SeteLiesen2017}.

Each $F_m$ is monic. We write its coefficients as
\begin{equation}
    F_m(z)
    =
    \sum_{n=0}^{m}p_{mn}z^n,
    \qquad
    p_{mm}=1,
    \label{eq:FWpoly}
\end{equation}
and extend the convention by setting $p_{mn}=0$ for $n>m$. The coefficient matrix
\begin{equation}
    P:=(p_{mn})_{m,n\ge1}
    \label{eq:Pmatrix}
\end{equation}
is unit lower triangular.
Its leading section $P_J=(p_{mn})_{1\le m,n\le J}$ is an invertible
$J\times J$ matrix. In particular,
\begin{equation}
    F_m(z)
    =
    p_{m0}+\sum_{n=1}^{m}p_{mn}z^n,
    \qquad
    \p_\nu F_m
    =
    \sum_{n=1}^{m}p_{mn}\p_\nu z^n.
    \label{eq:Ptransform}
\end{equation}

\begin{remark}[Walsh sequence]
\label{rem:Walsh-sequence}
The FW polynomials and the coefficient matrix $P$ depend on the
chosen Walsh sequence.  This choice changes the polynomial basis but does
not affect the physical solution.
\end{remark}

For $n\ge1$, we define
\begin{equation}
    Q_n(w)
    :=
    \int_w^{\infty}\frac{d\xi}{v_{n+1}(\xi)}.
    \label{eq:Qn}
\end{equation}
The path is taken in a fixed simply connected subdomain of $\Lcal$
containing infinity. All zeros of $v_{n+1}$ lie outside $\Lcal$, and
$v_{n+1}(\xi)^{-1}=O(|\xi|^{-n-1})$ as $|\xi|\to\infty$.
Thus, the integral converges for $n\ge1$, and integrals of its integrand
over large circles tend to zero. Together with simple connectivity,
Cauchy's theorem gives path independence within the chosen domain.
This defines a single-valued branch of $Q_n$ normalized by
$Q_n(\infty)=0$, with
\[
    Q_n'(w)=-\frac{1}{v_{n+1}(w)},
    \qquad
    Q_n(w)=\frac{1}{nw^n}+O(w^{-n-1})
    \quad(w\to\infty).
\]

For a level factor $\eta>1$, we set
\begin{equation}\Lcal_\eta
    :=
    \{w\in\widehat{\CC}:|V(w)|>\eta\mu\},
    \quad
    E_\eta
    :=
    \widehat{\CC}\setminus\Psi(\Lcal_\eta),
    \quad
    \p E_\eta=\Psi(\p\Lcal_\eta).
    \label{eq:Walsh-superlevel}
\end{equation}
For every $\eta>1$, the original compact set
$\overline{D}$ lies strictly inside $E_\eta$.

\begin{proposition}[Logarithmic expansion]
\label{prop:logFW}
Fix regular level factors $1<\eta<\kappa$, and let
$\mathcal O_\kappa\subset\Lcal_\kappa$ be a simply connected subdomain
containing infinity.  Choose the branches of $Q_n$ on
$\mathcal O_\kappa$ by \eqref{eq:Qn}.  Then, for $z$ in a compact
subset of $\operatorname{int}E_\eta$ and
$w\in\mathcal O_\kappa$,
\begin{equation}
    \log\!\left(\frac{\Psi(w)-z}{w-\beta_1}\right)
    =-\sum_{n=1}^{\infty}F_n(z)Q_n(w),
    \label{eq:logFW}
\end{equation}
where the logarithm of the quotient is normalized to zero at infinity.
This quotient is nonzero and holomorphic on $\mathcal O_\kappa$,
including infinity, and hence has a holomorphic logarithm there.
The convergence is
locally uniform in $w$ and uniform for $z$ in such compact subsets.  In
particular, since $\Gamma$ is compact and lies strictly
inside $E_\eta$, the formula is uniform for
$z\in\Gamma$ on compact subsets of
$\mathcal O_\kappa$.
\end{proposition}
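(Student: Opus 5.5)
The plan is to obtain \eqref{eq:logFW} by integrating the generating-function identity \eqref{eq:FWgen} in $w$ from $w$ to $\infty$ along paths in $\mathcal O_\kappa$. For fixed $z\in\operatorname{int}E_\eta$ and $w\in\mathcal O_\kappa$ we have $\Psi(w)\ne z$, since $\Psi(w)\in\Psi(\Lcal_\kappa)$ and $\Psi(\Lcal_\kappa)$ is disjoint from $E_\kappa\supset E_\eta$. Both $\Psi(w)-z$ and $w-\beta_1$ are nonvanishing on $\mathcal O_\kappa$, because $\beta_1$ lies outside $\Lcal$. The quotient $(\Psi(w)-z)/(w-\beta_1)$ tends to $1$ at infinity by \eqref{eq:PsiLaurent}. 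Hence it is holomorphic and nonvanishing on the simply connected domain $\mathcal O_\kappa\cup\{\infty\}$, and it has a unique holomorphic logarithm $L_z(w)$ with $L_z(\infty)=0$. Differentiating in $w$ gives
\[
L_z'(w)=\frac{\Psi'(w)}{\Psi(w)-z}-\frac{1}{w-\beta_1}.
\]
Since $F_0=1$ and $v_1(w)=w-\beta_1$, the $n=0$ term of \eqref{eq:FWgen} cancels the second term exactly. Therefore $L_z'(w)=\sum_{n\ge1}F_n(z)/v_{n+1}(w)$, and by \eqref{eq:Qn} this equals $-\sum_{n\ge1}F_n(z)Q_n'(w)$.

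Next we integrate from $w$ to $\infty$ and interchange sum and integral. This requires a uniform bound of the form
\[
|F_n(z)|\le C\,(\eta'\mu)^n,\qquad z\in K,
\]
for a compact $K\subset\operatorname{int}E_\eta$ and some $\eta'<\kappa$. This is the standard Faber--Walsh estimate of \cite{SeteLiesen2017}. I would derive it from the Cauchy-type integral
\[
F_n(z)=\frac{1}{2\pi\mathrm i}\int_{\p\Lcal_{\eta'}}\frac{\Psi'(\xi)\,v_n(\xi)}{\Psi(\xi)-z}\,d\xi ,
\]
choosing $\eta'\in(\eta,\kappa)$ with $K\subset\operatorname{int}E_{\eta'}$, and then applying \eqref{eq:Walsh-un-bound}. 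On $\mathcal O_\kappa$, \eqref{eq:Walsh-un-bound} also gives $|v_{n+1}(\xi)|^{-1}\le A_1^{-1}|V(\xi)|^{-n-1}\le A_1^{-1}(\kappa\mu)^{-n-1}$. The series $\sum_n F_n(z)/v_{n+1}$ is therefore dominated by a geometric series with ratio $\eta'/\kappa<1$, uniformly in $z\in K$ and in $w\in\mathcal O_\kappa$.

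The main obstacle is the integration to infinity, where uniform convergence on an unbounded path has to be upgraded to integrability. I would handle it as follows. Write $|V(\xi)|^{-n-1}\le (\kappa\mu)^{-(n-1)}|V(\xi)|^{-2}$, and note that $|V(\xi)|\sim|\xi|$ at infinity. Along a path that eventually runs radially out to infinity, this yields a summable majorant $C q^{n}\,|\xi|^{-2}$ with $q<1$. Dominated convergence then justifies termwise integration. Using $L_z(\infty)=0$ and $Q_n(\infty)=0$, we obtain
\[
-L_z(w)=\sum_{n\ge1}F_n(z)\int_w^\infty\frac{d\xi}{v_{n+1}(\xi)}=\sum_{n\ge1}F_n(z)Q_n(w),
\]
which is \eqref{eq:logFW}. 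By Cauchy's theorem on $\mathcal O_\kappa$ the result is path independent.

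The same majorant gives the remaining uniformity claims. Convergence is uniform for $z\in K$ and locally uniform in $w$; for $w$ in compact subsets one bounds $|Q_n(w)|$ by path length times $\sup|v_{n+1}|^{-1}$ on compact paths, plus the tail bound near infinity. Finally, $\Gamma=\p D$ is a compact subset of $\operatorname{int}E_\eta$, because $\overline D\subset\operatorname{int}E_\eta$ for every $\eta>1$. Hence the statement applies with $K=\Gamma$.
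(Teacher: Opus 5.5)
Your proposal is correct and follows essentially the same route as the paper. You subtract the $n=0$ term of the generating identity, bound $F_n$ through the S\`ete--Liesen contour integral on an intermediate level, and bound $1/v_{n+1}$ through the Walsh lower bound. You then justify termwise integration to infinity with a separate radial-tail majorant; your $Cq^n|\xi|^{-2}$ is a slightly cleaner version of the paper's $(2\eta_1\mu)^n|\xi|^{-n-1}$.

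The one point the paper treats more carefully is the validity of the generating identity itself. It cites that identity only for $z\in\partial E_\eta$ and extends it to $\operatorname{int}E_\eta$ by the maximum modulus principle, whereas you take it for granted there. It does also follow directly from your contour formula for $F_n$, combined with \eqref{eq:Walsh-Newton-kernel} and the exterior Cauchy formula. Two smaller points: the intermediate level $\eta'$ should be chosen regular, and you should note that $\operatorname{dist}(K,\p E_{\eta'})>0$.
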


\begin{proof}
The generating formula in \cite[Theorem~2.3(1)]{SeteLiesen2017} gives \eqref{eq:FWgen} for
$z\in\p E_\eta$ and $w$ on a strictly larger Walsh level.  We first
record why the same identity is valid uniformly for $z$ in compact
subsets of $\operatorname{int}E_\eta$. We fix a compact set
$W\Subset\mathcal O_\kappa\setminus\{\infty\}$ and choose an
intermediate regular level $\eta_1$ with $\eta<\eta_1<\kappa$.
We estimate the numerator and denominator separately, keeping all
constants independent of $n$.

For the numerator, the integral formula in \cite[Theorem~2.3(1), Eq.~(2.6)]{SeteLiesen2017} gives the
contour representation
\[
    F_n(z)
    =\frac{1}{2\pi \mathrm{i}}
    \int_{\p\Lcal_{\eta_1}}
    \frac{v_n(\tau)\Psi'(\tau)}{\Psi(\tau)-z}\,d\tau,
    \qquad z\in E_\eta,
\]
where all components of $\p\Lcal_{\eta_1}$ are oriented positively
around the bounded complementary regions.  Since $\eta_1>\eta$,
the compact sets $\p E_{\eta_1}=\Psi(\p\Lcal_{\eta_1})$ and
$E_\eta$ are disjoint.  Thus,
\[
    \operatorname{dist}(E_\eta,\p E_{\eta_1})>0.
\]
Moreover, $|V(\tau)|=\eta_1\mu$ on $\p\Lcal_{\eta_1}$, so the upper
bound in \eqref{eq:Walsh-un-bound} implies
\[
    |v_n(\tau)|
    \le A_2(\p\Lcal_{\eta_1})(\eta_1\mu)^n,
    \qquad \tau\in\p\Lcal_{\eta_1}.
\]
Taking absolute values in the contour integral therefore yields
\[
    \sup_{z\in E_\eta}|F_n(z)|
    \le
    \frac{A_2(\p\Lcal_{\eta_1})}
         {2\pi\operatorname{dist}(E_\eta,\p E_{\eta_1})}
    \left(\int_{\p\Lcal_{\eta_1}}|\Psi'(\tau)|\,|d\tau|\right)
    (\eta_1\mu)^n
    =:C_{\eta,\eta_1}(\eta_1\mu)^n.
\]
The constant is finite because the level is regular and $\Psi$ is
analytic in a neighborhood of $\p\Lcal_{\eta_1}$.

For the denominator, $W\subset\Lcal_\kappa$ implies
$|V(w)|>\kappa\mu$ on $W$.  Applying the lower bound in
\eqref{eq:Walsh-un-bound} with $C=W$ and index $n+1$ gives
\[
    |v_{n+1}(w)|
    \ge A_1(W)|V(w)|^{n+1}
    \ge A_1(W)(\kappa\mu)^{n+1},
    \qquad w\in W,
    \qquad A_1(W)>0.
\]
Combining these two estimates, we obtain
\[
    \sup_{\substack{z\in E_\eta\\ w\in W}}
    \left|\frac{F_n(z)}{v_{n+1}(w)}\right|
    \le
    \frac{C_{\eta,\eta_1}(\eta_1\mu)^n}
         {A_1(W)(\kappa\mu)^{n+1}}
    =\frac{C_{\eta,\eta_1}}{A_1(W)\kappa\mu}
    \left(\frac{\eta_1}{\kappa}\right)^n.
\]
Since $\eta_1<\kappa$, the Weierstrass test gives absolute and uniform convergence
on $E_\eta\times W$.  For each fixed $w\in W$, the resulting sum and
$\Psi'(w)/(\Psi(w)-z)$ are analytic in every component of
$\operatorname{int}E_\eta$ and continuous up to its boundary.
Since $w\in\Lcal_\kappa\subset\Lcal_\eta$, the possible pole
$\Psi(w)$ belongs to $\Psi(\Lcal_\eta)=\widehat\CC\setminus E_\eta$.
The regular Green level $\p E_\eta$ encloses finitely many bounded
simply connected components: a bounded component of its exterior would
contradict the maximum principle for the Green function at infinity.
On the closure of each interior component, the difference is continuous,
holomorphic in the interior, and zero on the boundary by
\eqref{eq:FWgen}. The maximum modulus principle therefore extends the
identity throughout $\operatorname{int}E_\eta$.

Subtracting the $n=0$ term from \eqref{eq:FWgen} and using
$F_0=1$ and $v_1(\xi)=\xi-\beta_1$, we obtain
\[
    \frac{\Psi'(\xi)}{\Psi(\xi)-z}
    -\frac{1}{\xi-\beta_1}
    =\sum_{n=1}^{\infty}\frac{F_n(z)}{v_{n+1}(\xi)}.
\]
The difference is $O(|\xi|^{-2})$ at infinity. To integrate the series,
we estimate a bounded path and a radial tail separately.

Since $\mathcal O_\kappa$ contains infinity, we choose $R$ sufficiently
large that $\{|\xi|\ge R\}\subset\mathcal O_\kappa$, that
$R>2\eta_1\mu$, and that
\[
    |V(\xi)|\ge\frac{|\xi|}{2},
    \qquad |\xi|\ge R.
\]
The last inequality follows from $|V(\xi)|/|\xi|\to1$.
For a fixed $w\in\mathcal O_\kappa\setminus\{\infty\}$, we choose the
integration path to consist of the positive real ray from infinity
to $R$, followed by a piecewise smooth path from $R$ to $w$ inside
$\mathcal O_\kappa$.  The latter path is compact and has finite length,
so the preceding uniform convergence on compact sets permits termwise
integration along it, uniformly for $z\in E_\eta$.

On the radial tail, the lower bound in \eqref{eq:Walsh-un-bound}
and the preceding estimate for $F_n$ imply
\[
    \sup_{z\in E_\eta}
    \left|\frac{F_n(z)}{v_{n+1}(\xi)}\right|
    \le
    \frac{C_{\eta,\eta_1}(\eta_1\mu)^n}
         {A_1(\{|\xi|\ge R\})(|\xi|/2)^{n+1}}
    =
    \frac{2C_{\eta,\eta_1}}
         {A_1(\{|\xi|\ge R\})}
    \frac{(2\eta_1\mu)^n}{|\xi|^{n+1}},
    \qquad n\ge1.
\]
Here, the Walsh constant is independent of $n$.  Integrating this
bound with $\xi=r\ge R$ gives
\[
    \int_R^\infty
    \sup_{z\in E_\eta}
    \left|\frac{F_n(z)}{v_{n+1}(r)}\right|\,dr
    \le
    \frac{2C_{\eta,\eta_1}}
         {A_1(\{|\xi|\ge R\})}
    \frac{1}{n}\left(\frac{2\eta_1\mu}{R}\right)^n,
\]
since $\int_R^\infty r^{-n-1}\,dr=R^{-n}/n$.
Because $2\eta_1\mu/R<1$, these bounds are summable over $n\ge1$.
Thus, the sum of the absolute integrals is finite, uniformly in
$z\in E_\eta$, which justifies exchanging the sum and the improper
integral on the tail.  Together with the bounded-path argument, this
justifies termwise integration along the complete path.  For $w$ in
a compact subset of $\mathcal O_\kappa\setminus\{\infty\}$, finitely
many fixed paths followed by short local segments give the same
argument uniformly in $w$.  Thus,
\[
    \log\!\left(\frac{\Psi(w)-z}{w-\beta_1}\right) = \int_{\infty}^{w} \left( \frac{\Psi'(\xi)}{\Psi(\xi)-z} -\frac{1}{\xi-\beta_1} \right)d\xi = \sum_{n=1}^{\infty}F_n(z) \int_{\infty}^{w} \frac{d\xi}{v_{n+1}(\xi)} =- \sum_{n=1}^{\infty}F_n(z)Q_n(w).
\]
The logarithm of the quotient is obtained by continuation from infinity,
where $(\Psi(w)-z)/(w-\beta_1)\to1$.  This proves
\eqref{eq:logFW} and its stated uniformity.
\end{proof}

Here, $\RePart$ and $\ImPart$ denote the real and imaginary parts,
respectively. Taking real parts, we obtain
\begin{equation}
    \log|\Psi(w)-z|
    =
    \log|w-\beta_1|
    -\RePart\!\left[\sum_{n=1}^{\infty}F_n(z)Q_n(w)\right].
    \label{eq:realLogFW}
\end{equation}
We now consider
$\boldsymbol\varphi=(\varphi_j)_{j=1}^N\in
\prod_{j=1}^N L^1(\Gamma_j)$ that is real-valued and has zero charge on
each component.
By the uniform convergence in Proposition~\ref{prop:logFW}, the series
in \eqref{eq:realLogFW} may be integrated term by term over the boundaries on compact subsets of a fixed exterior branch domain.  The
term $\log|w-\beta_1|$ is independent of the boundary variable and
vanishes after integration because of the componentwise zero-charge
condition.  Therefore,
\begin{equation}
    \sum_{j=1}^{N}\Scal_{\Gamma_j}[\varphi_j](\Psi(w))
    =-\frac{1}{2\pi}\RePart\!\left[
    \sum_{n=1}^{\infty}Q_n(w)
    \sum_{j=1}^{N}\int_{\Gamma_j}F_n(z)\varphi_j(z)\,ds(z)
    \right].
    \label{eq:SLFW}
\end{equation}

\paragraph{Branch dependence of the outgoing modes.}

The functions $Q_n$ are defined on a fixed exterior branch domain and
need not be single-valued on the whole lemniscatic exterior. However,
for a zero-charge density $\boldsymbol\varphi$, the complete combination
\[
\mathscr P_{\boldsymbol\varphi}(w)
:=
\sum_{n\ge1}
\left(
\sum_{j=1}^N
\int_{\Gamma_j}F_n(z)\varphi_j(z)\,ds(z)
\right)Q_n(w)
\]
has a single-valued holomorphic continuation to $\Lcal$, normalized to
vanish at infinity. Indeed,
\[
\mathscr P_{\boldsymbol\varphi}'(w)
=
-\Psi'(w)\sum_{j=1}^N
\int_{\Gamma_j}
\frac{\varphi_j(z)}{\Psi(w)-z}\,ds(z),
\]
and its periods vanish by the componentwise zero-charge condition.

Thus, although the individual $Q_n$ and finite outgoing sums may depend
on branch choices, the complete potential in \eqref{eq:SLFW} is
single-valued.


\paragraph{Rotationally symmetric $N$-component geometry.}
For $N\ge2$, a useful canonical family is obtained by setting
\[
    b_j=b\exp\!\left(\frac{2\pi \mathrm{i}(j-1)}{N}\right),\qquad
    m_j=\frac1N,\qquad j=1,\ldots,N,\qquad 0<\mu<b.
\]
Then
\begin{equation}
    \mathcal L_{N;b,\mu}
    :=\{w\in\widehat\CC:|w^N-b^N|>\mu^N\}.
    \label{eq:N-symmetric-lemniscate}
\end{equation}
The $N$ bounded complementary components are disjoint. They admit the
counterclockwise boundary parameterizations
\begin{equation}
    w_j(t)=b_j\left(1+(\mu/b)^N e^{\mathrm{i}t}\right)^{1/N},
    \qquad 0\le t<2\pi,
    \label{eq:N-symmetric-param}
\end{equation}
where the root is the branch taking the value $1$ at the center of
the disk $\{|\zeta-1|<(\mu/b)^N\}$. Each parameterization extends
conformally to $|e^{\mathrm{i}t}|<1$. Its angular origin need not give the
positive-real derivative normalization used later for $\chi_j$.
For the cyclic Walsh sequence $\beta_{nN+j}=b_j$, $n\ge0$,
$1\le j\le N$, one has
\begin{equation}
    v_{nN+m}(w)=(w^N-b^N)^n\prod_{i=1}^{m}(w-b_i),
    \qquad n\ge0,\quad 0\le m<N.
    \label{eq:N-symmetric-Walsh}
\end{equation}
The empty product is $1$. Each cycle uses every center once, giving
bounded discrepancy and hence \eqref{eq:Walsh-un-bound}. This canonical
family is used in the first numerical example in Section~\ref{sec:numerics}.

\section{FWPTs and Grunsky Factorization}
\label{sec:tensors}

\subsection{FWPTs and Multipole Expansion}

The CGPTs record the response to incident harmonic modes. We first
define them using complex monomials and then introduce their
FW counterparts, the FWPTs.

\begin{definition}[CGPTs]
\label{def:CGPTs}
For $m\in\NN$, we set $Z_m(z):=z^m$ and write
$\boldsymbol g_f=(\p_{\nu_j}f)_{j=1}^N$ for the boundary normal data
of a harmonic polynomial $f$. For $m,n\in\NN$, we define
\begin{align}
    \mathbb N_{mn}^{(1)}(D,\boldsymbol\lambda)
    &:=
    \sum_{j=1}^{N}
    \int_{\Gamma_j}
    Z_n(z)
    \left[
    (\Lambda_\Gamma-\mathcal K_\Gamma^*)^{-1}
    \boldsymbol g_{Z_m}
    \right]_j(z)\,ds(z),
    \label{eq:CGPT1}\\
    \mathbb N_{mn}^{(2)}(D,\boldsymbol\lambda)
    &:=
    \sum_{j=1}^{N}
    \int_{\Gamma_j}
    Z_n(z)
    \left[
    (\Lambda_\Gamma-\mathcal K_\Gamma^*)^{-1}
    \boldsymbol g_{\overline{Z_m}}
    \right]_j(z)\,ds(z),
    \label{eq:CGPT2}
\end{align}
and set
\begin{equation}
    \mathbb N^{(1)}=(\mathbb N_{mn}^{(1)})_{m,n\ge1},
    \qquad
    \mathbb N^{(2)}=(\mathbb N_{mn}^{(2)})_{m,n\ge1}.
    \label{eq:CGPTmatrices}
\end{equation}
Here, $[\cdot]_j$ denotes the component on $\Gamma_j$, and an overline denotes complex conjugation.
\end{definition}

Expanding the incident and testing monomials in Cartesian coordinates
shows that these CGPTs are complex linear combinations of the GPTs
\cite{AmmariKang2007,ChoiKimLim2023}.

Let $H$ be a real-valued entire harmonic incident field with expansion
\begin{equation}
    H(z)
    =
    \gamma_0+
    \sum_{m=1}^{\infty}
    \left(
    \gamma_m Z_m(z)+
    \overline{\gamma_m}\,\overline{Z_m(z)}
    \right),
    \label{eq:Hmono}
\end{equation}
where $\gamma_0\in\RR$ and $\gamma_m\in\CC$. Its classical CGPT expansion is
\begin{equation}
u(z)-H(z)
=
-\frac{1}{2\pi}\RePart\!\left[
\sum_{n=1}^{\infty}
\sum_{m=1}^{\infty}
\left(
\gamma_m\mathbb N_{mn}^{(1)}
+\overline{\gamma_m}\mathbb N_{mn}^{(2)}
\right)
\frac{z^{-n}}{n}
\right].
\label{eq:CGPTfar}
\end{equation}
This expansion holds for $|z|$ outside a disk containing $D$, as in
\cite{AmmariKang2004,ChoiHelsingKangLim2023,ChoiKimLim2023}.

Replacing the monomials in the CGPTs by FW polynomials
gives the FWPTs, which include moments of all positive orders.

\begin{definition}[FWPTs]
\label{def:FWPTs}
Let $F_m$ denote the $m$th FW polynomial associated with the
fixed admissible Walsh sequence, as defined in \eqref{eq:FWgen}.
For $m,n\in\NN$, we define
\begin{align}
    \mathbb F_{mn}^{(1)}(D,\boldsymbol\lambda)
    &:=
    \sum_{j=1}^{N}
    \int_{\Gamma_j}
    F_n(z)
    \left[
    (\Lambda_\Gamma-\mathcal K_\Gamma^*)^{-1}
    \boldsymbol g_{F_m}
    \right]_j(z)\,ds(z),
    \label{eq:FWPT1}\\
    \mathbb F_{mn}^{(2)}(D,\boldsymbol\lambda)
    &:=
    \sum_{j=1}^{N}
    \int_{\Gamma_j}
    F_n(z)
    \left[
    (\Lambda_\Gamma-\mathcal K_\Gamma^*)^{-1}
    \boldsymbol g_{\overline{F_m}}
    \right]_j(z)\,ds(z),
    \label{eq:FWPT2}
\end{align}
and set
\begin{equation}
    \mathbb F^{(1)}=(\mathbb F_{mn}^{(1)})_{m,n\ge1},
    \qquad
    \mathbb F^{(2)}=(\mathbb F_{mn}^{(2)})_{m,n\ge1}.
    \label{eq:FWPTmatrices}
\end{equation}
\end{definition}

We assume that the real-valued entire harmonic incident field $H$ admits the
following expansion on $\overline D$, with $\alpha_0\in\RR$ and
$\alpha_m\in\CC$:
\begin{equation}
    H(z)
    =
    \alpha_0+
    \sum_{m=1}^{\infty}
    \left(
    \alpha_mF_m(z)+
    \overline{\alpha_m}\,\overline{F_m(z)}
    \right).
    \label{eq:HFW}
\end{equation}
We denote by $H_d$ the partial sum through $m=d$. For an infinite
expansion, the convergence required here is
\[
    \|H_d-H\|_{C(\overline D)}
    +\left(\sum_{j=1}^N
    \|\p_{\nu_j}(H_d-H)\|_{L^2(\Gamma_j)}^2\right)^{1/2}
    \longrightarrow0,
    \qquad d\longrightarrow\infty,
\]
where $C(\overline D)$ carries the uniform norm. The normal data have
zero mean on every component by Lemma~\ref{lem:zerocharge}.
For harmonic polynomials, the expansion is finite, and these conditions
are automatic. We fix regular levels $1<\eta<\kappa$ and a branch domain
$\mathcal O_\kappa$ as in Proposition~\ref{prop:logFW}.
Then Lemma~\ref{lem:zerocharge} and \eqref{eq:SLFW} give
\begin{equation}
\begin{split}
u(\Psi(w))-H(\Psi(w))
=
-\frac{1}{2\pi}\RePart\!\left[
\sum_{n=1}^{\infty}
\sum_{m=1}^{\infty}
\left(
\alpha_m\mathbb F_{mn}^{(1)}
+\overline{\alpha_m}\mathbb F_{mn}^{(2)}
\right)
Q_n(w)
\right].
\end{split}
\label{eq:GME}
\end{equation}
The braces denote the limit of the incident partial sums in their
prescribed order for each fixed $n$. We take that limit before the
outgoing sum. To justify it, Proposition~\ref{prop:material-invertibility}
gives
$\boldsymbol\varphi_d
=(\Lambda_\Gamma-\mathcal K_\Gamma^*)^{-1}\boldsymbol g_{H_d}
\to\boldsymbol\varphi$ in $\prod_{j=1}^N L_0^2(\Gamma_j)$.
By linearity, the incident partial sum in braces is
$\mathfrak m_n(\boldsymbol\varphi_d)$. For each fixed $n$,
\[
\begin{split}
    |\mathfrak m_n(\boldsymbol\varphi_d)
      -\mathfrak m_n(\boldsymbol\varphi)|
    &\le
    \left(\sum_{j=1}^N\|F_n\|_{L^2(\Gamma_j)}^2\right)^{1/2}
    \left(\sum_{j=1}^N
    \|\varphi_{d,j}-\varphi_j\|_{L^2(\Gamma_j)}^2\right)^{1/2}
    \longrightarrow0.
\end{split}
\]
Moreover, $\sup_d\sum_j\|\varphi_{d,j}\|_{L^1(\Gamma_j)}<\infty$.
The bounds \eqref{eq:Fn-boundary-bound} and
\eqref{eq:Qn-geometric-bound}, proved below, therefore give
\[
    \sup_{w\in K}
    |Q_n(w)\mathfrak m_n(\boldsymbol\varphi_d)|
    \le C_K(\eta/\kappa)^n
\]
for $K\Subset\mathcal O_\kappa\setminus\{\infty\}$, with $C_K$
independent of $d,n$. Dominated convergence in the outgoing index now
gives \eqref{eq:GME} locally uniformly on the fixed branch domain.
We do not rearrange the incident series or assert absolute convergence
in its index.
The physical domain of this representation is $\Psi(\mathcal O_\kappa)$,
which contains a neighborhood of infinity because $\Psi(w)=w+O(1)$.
Thus, \eqref{eq:GME} and the Cartesian expansion \eqref{eq:CGPTfar}
represent the same field outside a sufficiently large disk, although
Walsh level curves need not be circles.

\paragraph{Convergence of the truncated multipole expansion.}
We estimate the error obtained by retaining finitely many outgoing modes
in \eqref{eq:GME}, with the exact expansion coefficients.  The level sets
$\Lcal_\eta$ and $E_\eta$ are defined in \eqref{eq:Walsh-superlevel}.

\begin{theorem}[Geometric convergence of the FW truncation]
\label{thm:FW-truncation}
Let $H$ be a real-valued harmonic polynomial and let
$\boldsymbol\varphi$ be the solution of \eqref{eq:blockres}.
Since the monic polynomials $F_0=1,F_1,\ldots,F_{d}$ form a basis for
polynomials of degree at most $d$, we may write
\[
    H(z)=\alpha_0+
    \sum_{m=1}^{d}
    \left(\alpha_mF_m(z)
    +\overline{\alpha_m}\,\overline{F_m(z)}\right),
    \qquad \alpha_0\in\RR,\quad \alpha_m\in\CC.
\]
Here, $d\ge1$ can be chosen even when $H$ is constant, by setting all
$\alpha_m=0$.  The boundary integrals satisfy
\begin{equation}
    \sum_{j=1}^{N}\int_{\Gamma_j}F_n(z)\varphi_j(z)\,ds(z)
    =\sum_{m=1}^{d}
    \left(\alpha_m\mathbb F_{mn}^{(1)}(D,\boldsymbol\lambda)
    +\overline{\alpha_m}\mathbb F_{mn}^{(2)}(D,\boldsymbol\lambda)\right),
    \qquad n\ge1.
    \label{eq:Cn-general}
\end{equation}
Fix $1<\eta<\kappa$, with $\eta$ a regular Walsh level, and a simply
connected subdomain $\mathcal O_\kappa$ of $\Lcal_\kappa$ containing
infinity.  Let $K\Subset\mathcal O_\kappa\setminus\{\infty\}$ be compact,
and use the branches of $Q_n$ on $\mathcal O_\kappa$ normalized by
\eqref{eq:Qn}.  For $J\ge0$, define
\begin{equation}
u^{(J)}(\Psi(w))
:=
H(\Psi(w))
-
\frac{1}{2\pi}\RePart\!\left[
\sum_{n=1}^{J}
\sum_{m=1}^{d}
\left(
\alpha_m\mathbb F_{mn}^{(1)}(D,\boldsymbol\lambda)
+
\overline{\alpha_m}\mathbb F_{mn}^{(2)}(D,\boldsymbol\lambda)
\right)
Q_n(w)
\right].
\label{eq:FW-truncated-general}
\end{equation}
where the sum is empty for $J=0$.
There exists a constant $C_{K,\eta,\kappa,H}>0$, independent of $J$,
such that
\begin{equation}
    \sup_{w\in K}|u(\Psi(w))-u^{(J)}(\Psi(w))|
    \le C_{K,\eta,\kappa,H}
    \left(\frac{\eta}{\kappa}\right)^{J+1},
    \qquad J\ge0.
    \label{eq:FW-truncation-estimate}
\end{equation}
The constant may also depend on the fixed geometry, contrast vector $\boldsymbol\lambda$,
Walsh sequence, and branch domain $\mathcal O_\kappa$.
Thus, the truncations converge uniformly on $K$ at a geometric rate.
\end{theorem}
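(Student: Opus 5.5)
Since $H$ is a real harmonic polynomial of degree at most $d$, the expansion in the statement is finite, so linearity is exact. The boundary data decompose as
\[
\boldsymbol g_H=\sum_{m=1}^d\bigl(\alpha_m\boldsymbol g_{F_m}+\overline{\alpha_m}\boldsymbol g_{\overline{F_m}}\bigr),
\]
because the constant $\alpha_0$ has zero normal derivative. Applying $(\Lambda_\Gamma-\mathcal K_\Gamma^*)^{-1}$, which is available by Proposition~\ref{prop:material-invertibility}, and testing against $F_n$ gives \eqref{eq:Cn-general} directly from Definition~\ref{def:FWPTs}. Write $c_n$ for the common value in \eqref{eq:Cn-general}. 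By Lemma~\ref{lem:zerocharge}, $\boldsymbol\varphi$ has zero charge on every component. Hence \eqref{eq:SLFW} gives
\[
u(\Psi(w))-H(\Psi(w))=-\tfrac1{2\pi}\RePart\sum_{n\ge1}c_nQ_n(w)
\]
on $\mathcal O_\kappa$, and therefore the truncation error is
\[
u(\Psi(w))-u^{(J)}(\Psi(w))=-\tfrac1{2\pi}\RePart\sum_{n>J}c_nQ_n(w).
\]
It then suffices to prove $\sup_{w\in K}|c_nQ_n(w)|\le C(\eta/\kappa)^n$ and sum the geometric tail. Summing from $n=J+1$ yields the factor $(\eta/\kappa)^{J+1}/(1-\eta/\kappa)$, which is \eqref{eq:FW-truncation-estimate}.

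\textbf{Bound on the moments.} I would reuse the contour bound from the proof of Proposition~\ref{prop:logFW}. Choose an intermediate regular level $\eta_1$ with $1<\eta_1\le\eta$; taking $\eta_1=\eta$ would also work if I shrink to a level slightly above $1$. Then $\Gamma\subset E_{\eta'}$ for some regular $\eta'<\eta_1$, and
\[
\sup_{\Gamma}|F_n|\le C(\eta\mu)^n .
\]
To get exactly the base $\eta\mu$, I apply that argument with the sets $E_{\eta'}$ and the contour $\p\Lcal_\eta$, where $1<\eta'<\eta$ is chosen so that $\overline D\subset \operatorname{int}E_{\eta'}$. This is possible because $\overline D=\bigcap_{\eta'>1}E_{\eta'}$ is compactly contained in each of them. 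This yields
\[
|c_n|\le C(\eta\mu)^n\sum_j\|\varphi_j\|_{L^1(\Gamma_j)},
\]
which is finite since $\varphi_j\in L^2(\Gamma_j)$.

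\textbf{Bound on $Q_n$.} This is the main obstacle: a geometric bound for $Q_n$ on $K$, namely $\sup_K|Q_n|\le C_K(\kappa\mu)^{-n}$, uniformly in $n$. I would use the same path as in Proposition~\ref{prop:logFW}: a radial ray from infinity to $R$, followed by finitely many fixed piecewise smooth paths of bounded length from $R$ to points of $K$, all lying in a compact subset $W$ of $\mathcal O_\kappa\setminus\{\infty\}$ together with the ray. On the bounded part, the lower bound \eqref{eq:Walsh-un-bound} gives $|v_{n+1}(\xi)|\ge A_1(W)|V(\xi)|^{n+1}$, and $|V(\xi)|>\kappa\mu$ on $W\subset\Lcal_\kappa$. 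One needs $\min_W|V|\ge\kappa\mu$, which holds because $W\subset\Lcal_\kappa$; if only the strict inequality is usable, any constant $\ge\kappa\mu$ suffices. So this part contributes at most $L\,A_1^{-1}(\kappa\mu)^{-n-1}$. On the tail $r\ge R$, with $R$ large, I have $|V(r)|\ge r/2$, and I would choose $R\ge 2\kappa\mu$. Then
\[
\int_R^\infty A_1^{-1}(r/2)^{-n-1}\,dr=\frac{2^{n+1}}{A_1\,nR^n}\le C(\kappa\mu)^{-n}.
\]
Combining the two pieces gives $\sup_K|c_nQ_n|\le C_K(\eta/\kappa)^n$. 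This is exactly the bound cited before the theorem as \eqref{eq:Fn-boundary-bound}–\eqref{eq:Qn-geometric-bound}, so in the write-up I would record these two estimates as displayed inequalities. The resulting constant depends on $K$, $\eta$, $\kappa$, $\mathcal O_\kappa$, the Walsh sequence, and, through $\|\boldsymbol\varphi\|_{L^1}$, on $H$ and $\boldsymbol\lambda$. This is as the statement allows.
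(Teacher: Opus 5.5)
Your proposal is correct and follows essentially the same route as the paper's proof. The common steps are the contour representation on $\p\Lcal_\eta$ giving $\max_\Gamma|F_n|\le C_\eta(\eta\mu)^n$, the split of the $Q_n$ path into a bounded piece in a compact subset of $\mathcal O_\kappa$ plus a radial tail with $R\ge 2\kappa\mu$ giving $(\kappa\mu)^{-n}$, and summation of the geometric tail.

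The one step the paper makes explicit that you skip is that $\boldsymbol\varphi$ is real-valued. This follows from real $H$, real $\lambda_j$, the real kernel, and uniqueness, and it is needed to move $\varphi_j$ inside the real part when you invoke \eqref{eq:SLFW}.
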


\begin{proof}
We estimate the moments and the outgoing functions separately.

The boundaries lie strictly inside every $E_\eta$ with
$\eta>1$. The integral formula in \cite[Theorem~2.3(1), Eq.~(2.6)]{SeteLiesen2017}, applied with an
intermediate level between $1$ and $\eta$, gives
\begin{equation}
    F_n(z)=\frac{1}{2\pi \mathrm{i}}
    \int_{\p\Lcal_\eta}
    v_n(\tau)\frac{\Psi'(\tau)}{\Psi(\tau)-z}\,d\tau,
    \qquad z\in\Gamma.
    \label{eq:Fn-contour-representation}
\end{equation}
All contour components are oriented positively around the bounded
complementary regions.  The regularity of the level ensures finite
contour length, and
$\operatorname{dist}(\Gamma,\p E_\eta)>0$.
Since $|V(\tau)|=\eta\mu$ on $\p\Lcal_\eta$, the upper bound in
\eqref{eq:Walsh-un-bound} yields
\begin{equation}
    \max_{z\in\Gamma}|F_n(z)|
    \le C_\eta(\eta\mu)^n,
    \qquad
    C_\eta:=
    \frac{A_2(\p\Lcal_\eta)
    \displaystyle\int_{\p\Lcal_\eta}|\Psi'(\tau)|\,|d\tau|}
    {2\pi\operatorname{dist}(\Gamma,\p E_\eta)}.
    \label{eq:Fn-boundary-bound}
\end{equation}
Since $\varphi_j\in L^1(\Gamma_j)$, as noted after
Lemma~\ref{lem:zerocharge}, estimate~\eqref{eq:Fn-boundary-bound} gives
\begin{equation}
    \left|\sum_{j=1}^{N}\int_{\Gamma_j}F_n(z)\varphi_j(z)\,ds(z)\right| \le\sum_{j=1}^{N}\int_{\Gamma_j} |F_n(z)|\,|\varphi_j(z)|\,ds(z) \le C_\eta \left(\sum_{j=1}^{N}\|\varphi_j\|_{L^1(\Gamma_j)}\right) (\eta\mu)^n. \label{eq:Cn-growth}
\end{equation}

We next bound $Q_n$ uniformly on $K$. The path independence established
at \eqref{eq:Qn} allows us to choose convenient integration paths within
$\mathcal O_\kappa$. First, we choose $R$ large enough that
\begin{equation}
    \{|\xi|\ge R\}\subset\mathcal O_\kappa,
    \qquad |V(\xi)|\ge\frac{|\xi|}{2}\quad(|\xi|\ge R),
    \qquad R>2\kappa\mu.
    \label{eq:U-tail-lower}
\end{equation}
Such an $R$ exists because $\mathcal O_\kappa$ contains infinity and
\[
    \frac{|V(\xi)|}{|\xi|}
    =\prod_{j=1}^{N}\left|1-\frac{b_j}{\xi}\right|^{m_j}
    \longrightarrow1.
\]
We integrate from $w$ to the positive real point $R$ along a bounded
path, and then from $R$ to infinity along the positive real ray.

The bounded paths can be chosen to lie in a compact set
$C_K\Subset\mathcal O_\kappa\setminus\{\infty\}$ and to have lengths
bounded by a constant $L_K$. Indeed, we cover $K$ by finitely many disks
whose closures lie in the finite part of $\mathcal O_\kappa$ and join
each center to $R$ by a fixed piecewise smooth path.  A point in each
disk is joined to its center by a segment and then follows the fixed
path to $R$.  The finite union of these closed disks and paths provides
$C_K$, and their finite lengths give $L_K$.
Applying \eqref{eq:Walsh-un-bound} to $C_K$, where
$|V(\xi)|>\kappa\mu$, gives
\begin{equation}
    \left|\int_w^R\frac{d\xi}{v_{n+1}(\xi)}\right|
    \le\frac{L_K}{A_1(C_K)\kappa\mu}(\kappa\mu)^{-n}.
    \label{eq:Qn-bounded-path}
\end{equation}

On the radial tail, we apply the same Walsh lower bound to the closed
exterior set $\{|\xi|\ge R\}$.  By \eqref{eq:U-tail-lower},
\begin{align}
    &\left|\int_R^\infty\frac{d\xi}{v_{n+1}(\xi)}\right| \le\frac{1}{A_1(\{|\xi|\ge R\})} \int_R^\infty\left(\frac{2}{r}\right)^{n+1}\,dr =\frac{2}{nA_1(\{|\xi|\ge R\})} \left(\frac{R}{2}\right)^{-n}\nonumber\\
    &\le\frac{2}{A_1(\{|\xi|\ge R\})}(\kappa\mu)^{-n},
    \qquad n\ge1. \label{eq:Qn-radial-tail}
\end{align}
Combining \eqref{eq:Qn-bounded-path} and \eqref{eq:Qn-radial-tail},
we obtain
\begin{equation}
    \sup_{w\in K}|Q_n(w)|\le B_{K,\kappa}(\kappa\mu)^{-n},
    \qquad n\ge1,
    \label{eq:Qn-geometric-bound}
\end{equation}
where we may take
\[
    B_{K,\kappa}=
    \frac{L_K}{A_1(C_K)\kappa\mu}
    +\frac{2}{A_1(\{|\xi|\ge R\})}.
\]
The choices of $R$, $C_K$, and $L_K$ were made independently of $n$,
so $B_{K,\kappa}$ is also independent of $n$.

Together with \eqref{eq:Cn-growth}, this gives
\begin{equation}
    \sup_{w\in K}\left|Q_n(w)\sum_{j=1}^{N}\int_{\Gamma_j}F_n(z)\varphi_j(z)\,ds(z)\right|
    \le C_\eta B_{K,\kappa}
    \left(\sum_{j=1}^{N}\|\varphi_j\|_{L^1(\Gamma_j)}\right)
    \left(\frac{\eta}{\kappa}\right)^n.
    \label{eq:CnQn-bound}
\end{equation}
In particular, the series is absolutely and uniformly convergent on $K$.
Since $H$ and all $\lambda_j$ are real and the block operator has a real
kernel, uniqueness implies that $\boldsymbol\varphi$ is real-valued.
Its componentwise zero charge, together with \eqref{eq:SLansatz} and
\eqref{eq:SLFW}, therefore gives
\begin{equation}
    u(\Psi(w))-H(\Psi(w))
    =-\frac{1}{2\pi}\RePart\!\left[
    \sum_{n=1}^{\infty}Q_n(w)\sum_{m=1}^{d}\left(\alpha_m\mathbb F_{mn}^{(1)}(D,\boldsymbol\lambda)+\overline{\alpha_m}\mathbb F_{mn}^{(2)}(D,\boldsymbol\lambda)\right)\right].
    \label{eq:GME-Cn}
\end{equation}
Here, we used \eqref{eq:Cn-general} to express the boundary integrals
in terms of the FWPTs, as in \eqref{eq:GME}.
Subtracting the truncation and summing the geometric tail, we obtain
\begin{align*}
    \sup_{w\in K}|u(\Psi(w))-u^{(J)}(\Psi(w))|
    &\le\frac{1}{2\pi}\sum_{n=J+1}^{\infty}
    \sup_{w\in K}\left|Q_n(w)\sum_{j=1}^{N}\int_{\Gamma_j}F_n(z)\varphi_j(z)\,ds(z)\right|\\
    &\le
    \frac{C_\eta B_{K,\kappa}}
    {2\pi(1-\eta/\kappa)}
    \left(\sum_{j=1}^{N}\|\varphi_j\|_{L^1(\Gamma_j)}\right)
    \left(\frac{\eta}{\kappa}\right)^{J+1}.
\end{align*}
This proves \eqref{eq:FW-truncation-estimate}.
\end{proof}

\begin{theorem}[CGPT--FWPT relation]
\label{thm:CGPTFWPT}
With $A^T$ denoting the transpose of a matrix $A$, the CGPT and FWPT
matrices satisfy
\begin{equation}
    \mathbb F^{(1)}=P\mathbb N^{(1)}P^T,
    \qquad
    \mathbb F^{(2)}=\overline P\,\mathbb N^{(2)}P^T.
    \label{eq:CGPTFWPT}
\end{equation}
\end{theorem}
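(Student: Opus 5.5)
The identity follows from bilinearity of both tensor families in the incident polynomial and the testing polynomial, combined with the triangular expansion \eqref{eq:Ptransform}. I will introduce the bilinear pairing
\[
\mathcal B(f,g):=\sum_{j=1}^N\int_{\Gamma_j}g(z)\left[(\Lambda_\Gamma-\mathcal K_\Gamma^*)^{-1}\boldsymbol g_f\right]_j(z)\,ds(z).
\]
Here $f$ is a harmonic polynomial, possibly complex-valued, and $g$ is a polynomial. The map $f\mapsto\boldsymbol g_f$ is complex linear. By Lemma~\ref{lem:zerocharge}, $\boldsymbol g_f$ lies in $\prod_j L^2_0(\Gamma_j)$, where Proposition~\ref{prop:material-invertibility} makes the inverse a bounded linear operator; this applies to the real and imaginary parts separately, and extends complex linearly. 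Hence $\mathcal B$ is complex linear in $f$ and complex linear in $g$. In this notation,
\[
\mathbb N^{(1)}_{mn}=\mathcal B(Z_m,Z_n),\quad \mathbb N^{(2)}_{mn}=\mathcal B(\overline{Z_m},Z_n),\quad \mathbb F^{(1)}_{mn}=\mathcal B(F_m,F_n),\quad \mathbb F^{(2)}_{mn}=\mathcal B(\overline{F_m},F_n).
\]

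The key step is to remove the constant terms $p_{m0}$. On the incident side, $\p_\nu p_{m0}=0$, so by \eqref{eq:Ptransform} we have $\boldsymbol g_{F_m}=\sum_{k=1}^m p_{mk}\boldsymbol g_{Z_k}$. Similarly, $\boldsymbol g_{\overline{F_m}}=\sum_{k=1}^m\overline{p_{mk}}\,\boldsymbol g_{\overline{Z_k}}$, since conjugation commutes with the real normal derivative. On the testing side, $F_n=p_{n0}+\sum_{l=1}^n p_{nl}Z_l$, and the constant contributes $p_{n0}\sum_j\int_{\Gamma_j}\varphi_j\,ds=0$. This vanishing holds because the solution density has zero charge on every component: $(\Lambda_\Gamma-\mathcal K_\Gamma^*)^{-1}$ maps into the zero-charge space by Proposition~\ref{prop:material-invertibility}, equivalently by Lemma~\ref{lem:zerocharge}.

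Expanding bilinearly, with all sums finite because $P$ is lower triangular, gives
\[
\mathbb F^{(1)}_{mn}=\sum_{k=1}^m\sum_{l=1}^n p_{mk}\,\mathbb N^{(1)}_{kl}\,p_{nl}=(P\mathbb N^{(1)}P^T)_{mn},\qquad
\mathbb F^{(2)}_{mn}=\sum_{k,l}\overline{p_{mk}}\,\mathbb N^{(2)}_{kl}\,p_{nl}=(\overline P\,\mathbb N^{(2)}P^T)_{mn}.
\]
Because $p_{mk}=0$ for $k>m$, each entry of these infinite matrix products is a finite sum, so the products are well defined.

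There is no real analytic obstacle here. The only point needing care is the bookkeeping just described: dropping the constant terms $p_{m0}$ on both sides, which requires the zero-charge property, and noting that the conjugated incident field produces $\overline P$ while the test side keeps $P$ unconjugated.
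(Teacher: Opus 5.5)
Your proof is correct and follows essentially the same route as the paper. You expand $\p_\nu F_m$ and $\p_\nu\overline{F_m}$ via the triangular coefficients in \eqref{eq:Ptransform}, drop the constant term of the testing polynomial by the componentwise zero-charge property (Lemma~\ref{lem:zerocharge}), and conclude by bilinearity; your remarks on complex linearity of the pairing and on the finiteness of each matrix-product entry only make explicit what the paper leaves implicit.
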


\begin{proof}
From \eqref{eq:Ptransform}, we obtain
\[
    \p_\nu F_m
    =\sum_{p=1}^{m}p_{mp}\p_\nu Z_p,
    \qquad
    \p_\nu\overline{F_m}
    =\sum_{p=1}^{m}\overline{p_{mp}}\p_\nu\overline{Z_p}.
\]
By Lemma~\ref{lem:zerocharge}, the constant term of the testing polynomial
does not contribute.  Bilinearity gives
\[
    \mathbb F_{mn}^{(1)} ={} \sum_{p=1}^{m}\sum_{q=1}^{n} p_{mp}p_{nq}\mathbb N_{pq}^{(1)},\qquad \mathbb F_{mn}^{(2)} ={} \sum_{p=1}^{m}\sum_{q=1}^{n} \overline{p_{mp}}p_{nq}\mathbb N_{pq}^{(2)}.
\]
\end{proof}

\subsection{Walsh--Grunsky Boundary Factorization}
\label{sec:Grunsky}

Throughout this subsection, the interfaces $\Gamma_j$ are analytic
Jordan curves. This assumption permits boundary evaluation of the
reciprocal Walsh and Walsh--Grunsky series. The preceding results
require only $C^{1,\alpha}$ regularity.

In the exterior lemniscatic domain, we use S\`ete's reciprocal
FW expansion \cite[Proposition~3.1(4)]{Sete2013}, which we
derive directly in the proof of Lemma~\ref{lem:analytic-boundary-traces}:
\begin{equation}
    F_m(\Psi(w))
    =
    v_m(w)
    +
    \sum_{n=1}^{\infty}
    \frac{g_{mn}}{v_n(w)},
    \qquad m\ge1.
    \label{eq:FW-Grunsky-expansion}
\end{equation}
We collect these coefficients into
\begin{equation}
    G:=(g_{mn})_{m,n\ge1}
    \label{eq:Walsh-Grunsky-matrix}
\end{equation}
and call $G$ the \emph{Walsh--Grunsky coefficient matrix}. The section
$G_J=(g_{mn})_{1\le m,n\le J}$ has size $J\times J$.
The first index $m$ labels the polynomial $F_m$, and the
second index $n$ labels the reciprocal mode $1/v_n$. No symmetry of
$G$ is assumed.

The following elementary interpolation identity will be useful. For
$w,\tau\in\Lcal\setminus\{\infty\}$ with $|V(\tau)|<|V(w)|$,
\begin{equation}
    \frac{1}{w-\tau}
    =
    \sum_{m=0}^{\infty}
    \frac{v_m(\tau)}{v_{m+1}(w)}.
    \label{eq:Walsh-Newton-kernel}
\end{equation}
Indeed, the recurrence $v_{m+1}(z)=(z-\beta_{m+1})v_m(z)$ gives
\[
    (w-\tau)\frac{v_m(\tau)}{v_{m+1}(w)}
    =\frac{v_m(\tau)}{v_m(w)}
    -\frac{v_{m+1}(\tau)}{v_{m+1}(w)}.
\]
Summing from $m=0$ to $J$ and using $v_0=1$, we obtain
\[
    \sum_{m=0}^{J}\frac{v_m(\tau)}{v_{m+1}(w)}
    =\frac{1}{w-\tau}
    \left(1-\frac{v_{J+1}(\tau)}{v_{J+1}(w)}\right).
\]
By \eqref{eq:Walsh-un-bound}, the remaining quotient satisfies
\[
    \left|\frac{v_{J+1}(\tau)}{v_{J+1}(w)}\right|
    \le\frac{A_2(\{\tau\})}{A_1(\{w\})}
    \left(\frac{|V(\tau)|}{|V(w)|}\right)^{J+1}
    \longrightarrow0.
\]
This proves \eqref{eq:Walsh-Newton-kernel}.

\begin{proposition}[Walsh--Grunsky kernel identity]
\label{prop:WG-kernel}
For $w$ and $\tau$ in separated exterior Walsh level regions where
the series below converge absolutely and locally uniformly,
\begin{equation}
    \frac{\Psi'(w)}
         {\Psi(w)-\Psi(\tau)}
    -
    \frac{1}{w-\tau}
    =
    \sum_{m,n\ge1}
    \frac{g_{mn}}
         {v_{m+1}(w)v_n(\tau)}.
    \label{eq:WG-kernel}
\end{equation}
The singularity at $w=\tau$ is entirely contained in the canonical
kernel $(w-\tau)^{-1}$. The remaining term is the Grunsky correction.
\end{proposition}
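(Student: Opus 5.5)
Substitute $z=\Psi(\tau)$ into the generating identity \eqref{eq:FWgen}, expand each $F_n(\Psi(\tau))$ by the reciprocal expansion \eqref{eq:FW-Grunsky-expansion}, and separate the polynomial part from the reciprocal part. Fix $\tau$ on a Walsh level strictly inside the region where \eqref{eq:FWgen} holds in the variable $w$. This means $|V(\tau)|<|V(w)|$, and $\Psi(\tau)$ lies in a compact subset of a set $E_{\eta}$ of the type used in Proposition~\ref{prop:logFW}. Then
\[
\frac{\Psi'(w)}{\Psi(w)-\Psi(\tau)}=\sum_{m=0}^\infty\frac{F_m(\Psi(\tau))}{v_{m+1}(w)}.
\]
We use this with $z=\Psi(\tau)\in\Omega$ rather than $z\in D$. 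This is legitimate because the contour representation in the proof of Proposition~\ref{prop:logFW} applies to any $z\in E_\eta$, and $E_\eta$ contains $\Psi(\{1<|V|/\mu\le\eta\})$. The bounds $|F_m(\Psi(\tau))|\lesssim(\eta_1\mu)^m$ and $|v_{m+1}(w)|\gtrsim(\kappa\mu)^{m+1}$ then give absolute convergence whenever the level of $w$ exceeds that of $\tau$.

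Next, insert \eqref{eq:FW-Grunsky-expansion} for $m\ge1$ and keep $F_0=1=v_0$. The right-hand side splits as
\[
\sum_{m=0}^\infty\frac{v_m(\tau)}{v_{m+1}(w)}+\sum_{m\ge1}\sum_{n\ge1}\frac{g_{mn}}{v_{m+1}(w)v_n(\tau)}.
\]
The first sum equals $1/(w-\tau)$ by the interpolation identity \eqref{eq:Walsh-Newton-kernel}, since $|V(\tau)|<|V(w)|$. Subtracting it gives \eqref{eq:WG-kernel}. The splitting of the convergent series $\sum_m F_m(\Psi(\tau))/v_{m+1}(w)$ into two series is valid because the first series converges on its own, so the second converges as the difference.

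The main obstacle is to justify the double series as an absolutely convergent series in $(m,n)$, as the statement asserts, rather than merely as an iterated sum. For this I would use the Cauchy-type coefficient formula from the derivation of \eqref{eq:FW-Grunsky-expansion} (Lemma~\ref{lem:analytic-boundary-traces}): $g_{mn}$ is obtained by integrating $F_m(\Psi(\zeta))v_{n-1}(\zeta)$ over a level curve $|V(\zeta)|=\rho\mu$. This should give $|g_{mn}|\le C(\rho_1\mu)^{m}(\rho_2\mu)^{n}$ for levels $\rho_1$ slightly above $1$ (controlling $F_m$ on $\Psi$ of that level) and any $\rho_2>1$ with $\rho_2\le\rho_1$ via contour deformation. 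Combined with the Walsh bounds \eqref{eq:Walsh-un-bound} for $v_{m+1}(w)$ and $v_n(\tau)$, this yields a dominating geometric double series whenever the level of $w$ exceeds $\rho_1$ and the level of $\tau$ exceeds $\rho_2$. Absolute convergence then permits the rearrangement, and analyticity in both variables extends the identity to the stated separated regions by uniqueness of continuation.

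Finally, both sides are analytic in $(w,\tau)$ off the diagonal. The right-hand side is holomorphic across $w=\tau$ wherever the double series converges. Hence the diagonal singularity of $\Psi'(w)/(\Psi(w)-\Psi(\tau))$ is exactly that of $(w-\tau)^{-1}$, which confirms the last remark of Proposition~\ref{prop:WG-kernel}.
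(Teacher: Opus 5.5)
Your proposal is correct and follows the same route as the paper. You substitute $z=\Psi(\tau)$ into \eqref{eq:FWgen}, insert the reciprocal expansion \eqref{eq:FW-Grunsky-expansion}, recognize the canonical part as the interpolation identity \eqref{eq:Walsh-Newton-kernel}, and read off the Grunsky double series as the remainder. Your extra justification of absolute convergence, via the contour bound on $g_{mn}$ together with the Walsh bounds, is the same estimate the paper carries out in Lemma~\ref{lem:analytic-boundary-traces}, whereas the proposition itself simply assumes that convergence as a hypothesis.
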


\begin{proof}
We set $z=\Psi(\tau)$ in the generating relation \eqref{eq:FWgen} and use
\eqref{eq:FW-Grunsky-expansion}.  The contribution of the canonical
parts $v_m(\tau)$ is exactly \eqref{eq:Walsh-Newton-kernel}, while the
remaining terms give the double series on the right-hand side of
\eqref{eq:WG-kernel}.
\end{proof}

We next prove absolute and uniform convergence near the boundary.
We do this by showing that the sums of the supremum norms of the
summands are finite on compact subsets.
For $\rho>0$, we write
\begin{equation}
    \Lcal^{(\rho)}
    :=
    \{w\in\widehat{\CC}:|V(w)|>\rho\mu\},
    \qquad
    \p\Lcal^{(\rho)}
    =
    \{w\in\CC:|V(w)|=\rho\mu\}.
    \label{eq:inner-Walsh-levels}
\end{equation}
Thus, $\Lcal^{(1)}=\Lcal$.

\begin{lemma}[Analytic boundary continuation and convergence]
\label{lem:analytic-boundary-traces}
Assume that all $\Gamma_j$, $j=1,\ldots,N$, are analytic.  Then there
exists $\rho_0\in(0,1)$ such that the inverse Walsh map $\Psi$ extends to
a one-to-one holomorphic map on $\Lcal^{(\rho_0)}$, with nonvanishing
derivative there.

For the coefficients $g_{mn}$ defined in
\eqref{eq:FW-Grunsky-expansion} and every choice
\begin{equation}
    \rho_0<\rho_1<\rho_2<\rho_3<1,
    \label{eq:collar-levels}
\end{equation}
there is a constant $C>0$, independent of $m,n$, such that
\begin{equation}
    |g_{mn}|
    \le
    C\,(\rho_1\mu)^m(\rho_2\mu)^{n-1},
    \qquad m,n\ge1.
    \label{eq:Grunsky-decay}
\end{equation}
Consequently, for every fixed $m\ge1$, the reciprocal Walsh series
\begin{equation}
    \sum_{n=1}^{\infty}\frac{g_{mn}}{v_n(w)}
    \label{eq:reciprocal-normal-series}
\end{equation}
converges absolutely and uniformly on
$\{w:|V(w)|\ge \rho_3\mu\}$, and the double series
\begin{equation}
    \sum_{m,n\ge1}
    \frac{g_{mn}}
         {v_{m+1}(w)v_n(\tau)}
    \label{eq:kernel-normal-series}
\end{equation}
converges absolutely and uniformly on $\{(w,\tau):|V(w)|\ge \rho_3\mu,\ |V(\tau)|\ge \rho_3\mu\}$.
In particular, the convergence is absolute and uniform in neighborhoods
of $\p\Lcal$ and $\p\Lcal\times\p\Lcal$, respectively. Hence,
\eqref{eq:FW-Grunsky-expansion} and \eqref{eq:WG-kernel} hold on
the canonical boundary, with termwise integration and differentiation
justified.
\end{lemma}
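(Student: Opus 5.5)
We would prove the three assertions in order: analytic continuation of $\Psi$, then a contour-integral formula for $g_{mn}$ giving \eqref{eq:Grunsky-decay}, then the convergence statements via the Walsh bounds \eqref{eq:Walsh-un-bound}.

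\emph{Continuation.} Each $\Gamma_j$ is an analytic Jordan curve, so $\Phi=\Psi^{-1}$ extends by Schwarz reflection across $\Gamma_j$. We first reflect in $\Gamma_j$ in the $z$-plane. We then reflect in the corresponding lemniscate component in the $w$-plane, using the local anti-conformal reflection defined by the real-analytic function $\log|V(w)|-\log\mu$, which vanishes on $\p\Lcal$ with nonzero gradient. The lemniscate level $|V|=\mu$ is regular, since $V'/V=\sum_j m_j/(w-b_j)$ is nonzero on $\p\Lcal$. This gives a conformal extension of $\Phi$ to a one-sided neighborhood of $\Gamma_j$ inside $D_j$, mapping into a one-sided neighborhood of the $j$th lemniscate component. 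Compactness of $\p\Lcal$ and continuity of $|V|$ then yield $\rho_0<1$ with $\Psi$ univalent on $\Lcal^{(\rho_0)}$ and $\Psi'\ne0$. Because the level sets $|V|=\rho\mu$ vary continuously near $\rho=1$ and the $N$ components stay separated, we can choose $\rho_0$ close enough to $1$ that $\{\rho_0\mu<|V|\le\mu\}$ is a collar inside the reflected neighborhood.

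\emph{Coefficient formula and decay.} Fix $m$. By the FW Cauchy representation already used, $F_m(\Psi(w))-v_m(w)$ is holomorphic in $\Lcal^{(\rho_0)}$ and is $O(1/w)$ at infinity. To identify $g_{mn}$ we would derive \eqref{eq:FW-Grunsky-expansion} directly. First, the dual of \eqref{eq:Walsh-Newton-kernel} expands $1/(w-\tau)$ for $|V(\tau)|<|V(w)|$. For $w$ in the collar, we apply Cauchy's formula on $\p\Lcal^{(\rho_1)}$ and on a large circle to the function $h_m(w):=F_m(\Psi(w))-v_m(w)$, which decays at infinity. We then expand the kernel in the opposite direction, using $1/(\tau-w)=\sum_n v_{n-1}(w)/v_n(\tau)$ for $|V(w)|<|V(\tau)|$. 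This gives
\[
g_{mn}=\frac{1}{2\pi\mathrm i}\int_{\p\Lcal^{(\rho_2)}}h_m(\tau)\,v_{n-1}(\tau)\,d\tau
\]
with suitable orientation. The polynomial part integrates to zero, so equivalently
\[
g_{mn}=\frac{1}{2\pi\mathrm i}\int_{\p\Lcal^{(\rho_2)}}F_m(\Psi(\tau))\,v_{n-1}(\tau)\,d\tau .
\]
On $\p\Lcal^{(\rho_2)}$ we have $|v_{n-1}|\le A_2(\rho_2\mu)^{n-1}$. It remains to bound $F_m$ on $\Psi(\p\Lcal^{(\rho_2)})$, which lies inside $D$. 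We use the contour representation \eqref{eq:Fn-contour-representation} on the level $\p\Lcal^{(\rho_1)}$, now inside the continued domain. It is legitimate because $\Psi(\p\Lcal^{(\rho_2)})$ is separated from $\Psi(\p\Lcal^{(\rho_1)})$ and the generating identity \eqref{eq:FWgen} extends by the same argument as in Proposition~\ref{prop:logFW}. This gives $|F_m|\le C(\rho_1\mu)^m$. \textbf{This is the main obstacle:} justifying the FW Cauchy representation and generating formula below level $\mu$ using the continued $\Psi$. We would redo the Sète--Liesen derivation with $\p\Lcal^{(\rho_1)}$ replacing $\p\Lcal_{\eta}$, relying only on univalence of $\Psi$ on $\Lcal^{(\rho_0)}$ and Cauchy's theorem. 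The roles of $\rho_1$ and $\rho_2$ may have to be arranged carefully for the pieces to nest. Combining the two bounds gives \eqref{eq:Grunsky-decay}.

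\emph{Convergence.} On $\{|V(w)|\ge\rho_3\mu\}$, the lower bound $|v_n(w)|\ge A_1(\rho_3\mu)^n$ gives terms bounded by $C(\rho_1\mu)^m(\rho_2/\rho_3)^{n}$, which is summable in $n$. For the double series, we additionally use $|v_{m+1}(w)|\ge A_1(\rho_3\mu)^{m+1}$, giving terms bounded by $C'(\rho_1/\rho_3)^m(\rho_2/\rho_3)^n$, summable in both indices. The closed sets avoid the centers $b_j$, since $|V|$ vanishes there, so \eqref{eq:Walsh-un-bound} applies. Weierstrass' M-test gives absolute and uniform convergence. The identities \eqref{eq:FW-Grunsky-expansion} and \eqref{eq:WG-kernel} then extend from the exterior levels to $\p\Lcal$ by uniqueness of analytic continuation and uniform limits, and uniform convergence of holomorphic series justifies termwise integration and differentiation.
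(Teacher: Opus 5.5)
Your outline follows the paper's route: Schwarz-reflection continuation, then the coefficient formula $g_{mn}=\frac{1}{2\pi\mathrm{i}}\int_{\p\Lcal^{(\rho_2)}}v_{n-1}(\tau)h_m(\tau)\,d\tau$ from the exterior Cauchy formula and \eqref{eq:Walsh-Newton-kernel}, then the Weierstrass test. However, the decay step fails with the nesting you chose.

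Since $\rho_1<\rho_2$, every point $\Psi(\tau)$ with $\tau\in\p\Lcal^{(\rho_2)}$ lies \emph{outside} the curve $\Psi(\p\Lcal^{(\rho_1)})$, not inside it. Separation of the two image curves is not enough. Deforming the FW contour integral from a level $\eta>1$ down to $\p\Lcal^{(\rho_1)}$ crosses the simple pole $s=\tau$ of $v_m(s)\Psi'(s)/(\Psi(s)-\Psi(\tau))$, whose residue is $v_m(\tau)$. So the integral over $\p\Lcal^{(\rho_1)}$ equals $h_m(\tau)=F_m(\Psi(\tau))-v_m(\tau)$, not $F_m(\Psi(\tau))$.

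The bound $|F_m|\le C(\rho_1\mu)^m$ on $\Psi(\p\Lcal^{(\rho_2)})$ is in fact false. There, $|F_m(\Psi(\tau))|\ge A_1(\rho_2\mu)^m-|h_m(\tau)|$, and for $\Psi(w)=w$ one has $F_m\circ\Psi=v_m$ exactly. Discarding the polynomial part $v_mv_{n-1}$ is legitimate as an identity, but it is exactly what destroys the factor $(\rho_1/\rho_2)^m$. As written, you would only get $(\rho_2\mu)^{m+n-1}$, not \eqref{eq:Grunsky-decay}.

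Either of two short repairs works.
\begin{itemize}
\item Keep $h_m$ and bound it on $\p\Lcal^{(\rho_2)}$ by the deformed integral just described, giving $|h_m|\le C(\rho_1\mu)^m$ there. Substituting yields the paper's double contour formula \eqref{eq:g-contour-collar}.
\item Keep your $F_m$ form, but place the $g_{mn}$ contour at a level $\rho'\in(\rho_0,\rho_1)$, whose image is enclosed by $\Psi(\p\Lcal^{(\rho_1)})$. Then $|F_m|\le C(\rho_1\mu)^m$ there, and you finish with $(\rho'\mu)^{n-1}\le(\rho_2\mu)^{n-1}$.
\end{itemize}

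In both versions, the obstacle you flagged disappears. You do not need to extend \eqref{eq:FWgen} or redo the S\`ete--Liesen derivation below level $\mu$. The residue theorem on $\{\rho_1\mu<|V|<\eta\mu\}$ suffices: it applies because $\Psi$ is univalent there, and $F_m$, being the polynomial part of $v_m\circ\Phi$ at infinity, is unchanged by the continuation.

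Also, in the exterior Cauchy formula the expansion you need is \eqref{eq:Walsh-Newton-kernel} itself, with $v_n(w)$ in the denominator. The reversed expansion only serves to show that the large-circle term vanishes. The continuation and convergence parts of your proposal are fine.
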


\begin{proof}
\emph{Conformal continuation.}
Since the source and target boundaries are analytic Jordan curves,
Schwarz reflection extends $\Psi$ and its inverse holomorphically across
each boundary component \cite{Pommerenke1992}. These extensions remain
local inverses and hence have nonvanishing derivatives. Since there are
finitely many boundary components, they patch to a holomorphic,
locally one-to-one extension on a neighborhood of $\overline\Lcal$ in
the Riemann sphere, using the local coordinates $1/w$ and $1/\Psi(w)$
at infinity.

After shrinking this neighborhood if necessary, the extension is
one-to-one. Otherwise, there would be distinct points
$w_r,\tau_r$ approaching $\overline\Lcal$ with
$\Psi(w_r)=\Psi(\tau_r)$. By compactness and injectivity on
$\overline\Lcal$, both sequences have the same limit, contradicting
local injectivity near that point. Hence, for some $\rho_0<1$,
$\Psi$ is one-to-one and holomorphic on $\Lcal^{(\rho_0)}$ with
nonvanishing derivative.

\emph{Coefficient estimates.}
On the enlarged domain, the lemniscatic level parameter is $\rho_0\mu$.
The polynomials $F_m$ are unchanged: they are the polynomial parts of
$v_m(\Phi(z))$ at infinity, and neither $\Phi=\Psi^{-1}$ near infinity
nor the Walsh sequence has changed, as explained in
\cite[Corollary~3.1]{SeteLiesen2017}.
We record a direct derivation of the reciprocal coefficient formula
of \cite[Proposition~3.1(4)]{Sete2013}.
We set $h_m(\tau):=F_m(\Psi(\tau))-v_m(\tau)$, which is holomorphic
on $\Lcal^{(\rho_0)}$ and is $O(\tau^{-1})$ at infinity.
Deforming the contour in the defining integral for $F_m$ inward
across the simple pole at $s=\tau$, whose residue is $v_m(\tau)$,
we obtain
\[
    h_m(\tau)=\frac{1}{2\pi\mathrm{i}}
    \int_{\p\Lcal^{(\rho_1)}}
    v_m(s)\frac{\Psi'(s)}{\Psi(s)-\Psi(\tau)}\,ds,
    \qquad |V(\tau)|>\rho_1\mu.
\]
For $|V(w)|>\rho_2\mu$, exterior Cauchy's formula and the telescoping
identity \eqref{eq:Walsh-Newton-kernel} give
\[
\begin{split}
    h_m(w)
    &=\frac{1}{2\pi\mathrm{i}}
    \int_{\p\Lcal^{(\rho_2)}}\frac{h_m(\tau)}{w-\tau}\,d\tau\\
    &=\sum_{n\ge1}\frac{1}{v_n(w)}
    \left[\frac{1}{2\pi\mathrm{i}}
    \int_{\p\Lcal^{(\rho_2)}}v_{n-1}(\tau)h_m(\tau)\,d\tau\right].
\end{split}
\]
The bound \eqref{eq:Walsh-un-bound} justifies termwise integration
on separated Walsh levels. The telescoping proof also applies on the
enlarged domain, since the centers remain outside it.
Uniqueness of the reciprocal expansion identifies the coefficients
in brackets with $g_{mn}$. Substituting the contour formula for $h_m$
therefore yields
\begin{equation}
    g_{mn}=-\frac{1}{4\pi^2}
    \int_{\p\Lcal^{(\rho_2)}}\!
    \int_{\p\Lcal^{(\rho_1)}}
    v_{n-1}(\tau)v_m(s)
    \frac{\Psi'(s)}{\Psi(s)-\Psi(\tau)}\,ds\,d\tau,
    \label{eq:g-contour-collar}
\end{equation}
where the contour components are positively oriented around their
bounded complementary regions. The two image contours are disjoint
compact sets, so the denominator is bounded away from zero. Their
lengths and $|\Psi'|$ on the contours are finite. Moreover,
\eqref{eq:Walsh-un-bound} gives
\[
    |v_m(s)|\le A_2(\p\Lcal^{(\rho_1)})(\rho_1\mu)^m,
    \qquad
    |v_{n-1}(\tau)|
    \le A_2(\p\Lcal^{(\rho_2)})(\rho_2\mu)^{n-1}.
\]
Taking absolute values in \eqref{eq:g-contour-collar} proves
\eqref{eq:Grunsky-decay}, with a constant independent of $m,n$.

\emph{Absolute and uniform convergence and boundary identities.}
On the closed set
$C_{\rho_3}:=\{w\in\widehat{\CC}:|V(w)|\ge \rho_3\mu\}$, the lower bound
in \eqref{eq:Walsh-un-bound} gives
\begin{equation}
    |v_k(w)|\ge A_1(C_{\rho_3})(\rho_3\mu)^k,
    \qquad w\in C_{\rho_3},\quad k\ge0.
    \label{eq:Walsh-lower-collar}
\end{equation}
Combining this with \eqref{eq:Grunsky-decay}, for $w,\tau\in C_{\rho_3}$,
we obtain
\[
    \left|\frac{g_{mn}}{v_n(w)}\right|
    \le\frac{C(\rho_1\mu)^m}{A_1(C_{\rho_3})\rho_3\mu}
    \left(\frac{\rho_2}{\rho_3}\right)^{n-1}
\]
and
\begin{equation}
    \left|\frac{g_{mn}}{v_{m+1}(w)v_n(\tau)}\right|
    \le\frac{C}{A_1(C_{\rho_3})^2(\rho_3\mu)^2}
    \left(\frac{\rho_1}{\rho_3}\right)^m
    \left(\frac{\rho_2}{\rho_3}\right)^{n-1}.
    \label{eq:kernel-Mtest-bound}
\end{equation}
Since $\rho_1/\rho_3,\rho_2/\rho_3<1$, these bounds are summable in $n$ for each
fixed $m$, and in $(m,n)$, respectively. Since these bounds are
independent of the evaluation points, the sums of the supremum norms
are finite on the open neighborhood $\Lcal^{(\rho_3)}$ of $\p\Lcal$
and on its Cartesian square. The Weierstrass test therefore proves
the asserted absolute and uniform convergence.

The identity theorem now extends \eqref{eq:FW-Grunsky-expansion} from
$\Lcal$ to $\Lcal^{(\rho_3)}$. For the kernel identity, Taylor expansion at $\tau$ gives
\[
\begin{aligned}
    \Psi(w)-\Psi(\tau)
    &=\Psi'(\tau)(w-\tau)
      +\frac12\Psi''(\tau)(w-\tau)^2+O((w-\tau)^3),\\
    \Psi'(w)
    &=\Psi'(\tau)+\Psi''(\tau)(w-\tau)+O((w-\tau)^2).
\end{aligned}
\]
Since $\Psi'(\tau)\neq0$, dividing these expansions and subtracting
$1/(w-\tau)$ yields
\[
    \frac{\Psi'(w)}{\Psi(w)-\Psi(\tau)}-\frac{1}{w-\tau}
    =\frac{\Psi''(\tau)}{2\Psi'(\tau)}+O(w-\tau)
    \qquad(w\to\tau).
\]
To make joint regularity explicit, we define the divided difference
\[
    \Theta(w,\tau)=
    \begin{cases}
    \dfrac{\Psi(w)-\Psi(\tau)}{w-\tau},&w\ne\tau,\\[4pt]
    \Psi'(\tau),&w=\tau.
    \end{cases}
\]
It is jointly holomorphic at finite points of the product neighborhood
and is nonzero by injectivity and the nonvanishing derivative.
Consequently, the regularized kernel
\[
    \mathscr R(w,\tau):=
    \frac{\p_w\Theta(w,\tau)}{\Theta(w,\tau)}
\]
is jointly holomorphic there. Off the diagonal, it equals the left-hand
side of \eqref{eq:WG-kernel}, and on the diagonal, its value is
$\Psi''(\tau)/(2\Psi'(\tau))$.
The identity theorem extends \eqref{eq:WG-kernel} to the product
neighborhood, including the diagonal. Thus, $\mathscr R$ is jointly
continuous and bounded on the compact boundary product.
Absolute and uniform convergence justifies termwise boundary
integration. Cauchy estimates on a smaller neighborhood then justify
termwise differentiation.
\end{proof}

No equality or rationality of the Walsh weights is used here:
\eqref{eq:Walsh-un-bound} holds for an admissible sequence with arbitrary
positive weights. The admissible collar and its constants depend on the
fixed geometry, including the weights, but the choices
$\rho_0<\rho_1<\rho_2<\rho_3<1$ impose no additional arithmetic
restriction on them. The estimates involve only $|V|$ and the
polynomials $v_n$, so no global branch of the fractional powers is needed.

We now use \eqref{eq:WG-kernel} to derive the boundary matrices for the
conductivity problem. We denote by $V_1,\ldots,V_N$ the open bounded
domains enclosed by the components of $\p\Lcal$, labeled compatibly
with the physical components so that $b_j\in V_j$. We write
$\mathbb D:=\{\zeta\in\CC:|\zeta|<1\}$ for the open unit disk and choose
Riemann maps
\begin{equation}
    \chi_j:\mathbb D\longrightarrow V_j,
    \qquad
    \chi_j(0)=b_j,
    \qquad
    \chi_j'(0)>0,
    \qquad j=1,\ldots,N.
    \label{eq:chi}
\end{equation}
Under the boundary regularity assumed above, the maps $\chi_j$ extend
to the unit circle.  We use them only to parameterize the canonical
boundary:
\begin{equation}
    w_j(t):=\chi_j(e^{\mathrm{i}t}),
    \qquad
    z_j(t):=\Psi(w_j(t)),
    \qquad 0\le t<2\pi.
    \label{eq:boundaryparam}
\end{equation}
We set
\begin{equation}
    h_j(t):=|z_j'(t)|
    =
    |\Psi'(w_j(t))w_j'(t)|
    \label{eq:hj}
\end{equation}
so that $ds=h_j(t)\,dt$ on $\Gamma_j$. We choose
$s_j\in\{-1,1\}$ so that
\begin{equation}
    \nu_j(t)
    =
    -\mathrm{i}s_j\frac{z_j'(t)}{|z_j'(t)|}
    \label{eq:normal}
\end{equation}
points outward from $D_j$. For the counterclockwise disk
parameterization above, $s_j=1$. We retain $s_j$ to cover reversed
parameterizations, for which $s_j=-1$.

By Proposition~\ref{prop:material-invertibility} and
Lemma~\ref{lem:zerocharge}, the densities and their NP images have zero
charge on each component. We therefore use the nonzero Fourier modes:
\begin{equation}
    \varphi_j(z_j(t))
    =
    \frac{1}{h_j(t)}
    \sum_{k\neq0}c_{j,k}e^{\mathrm{i}kt}.
    \label{eq:zeta}
\end{equation}
By Fourier orthogonality,
\begin{equation}
    c_{j,k}
    =
    \frac{1}{2\pi}
    \int_0^{2\pi}
    \varphi_j(z_j(t))h_j(t)e^{-\mathrm{i}kt}\,dt,
    \qquad k\neq0.
    \label{eq:coeffextract}
\end{equation}

We index boundary modes by
$\mathcal I=\{(j,k):1\le j\le N,\ k\in\ZZ\setminus\{0\}\}$.
With cutoffs $1\le|k|\le M$ and $1\le m,n\le J$, the sections have
sizes $2NM\times J$ for moment matrices, $J\times2NM$ for $L$ and
source matrices, and $2NM\times2NM$ for boundary operators.
Unless a finite section is explicitly indicated, the identities below
concern the full infinite arrays.

For $j\in\{1,\ldots,N\}$, $k\in\ZZ\setminus\{0\}$, and $m,n\ge1$,
we define two matrices depending only on the canonical lemniscatic data,
with boundary Fourier modes indexing rows and Walsh modes indexing columns:
\begin{equation}
    [\mathcal M_+]_{(j,k),n} := \int_0^{2\pi}v_n(w_j(t))e^{\mathrm{i}kt}\,dt,\qquad [\mathcal M_-]_{(j,k),m} := \int_0^{2\pi}\frac{e^{\mathrm{i}kt}}{v_m(w_j(t))}\,dt. \label{eq:Tplus}
\end{equation}
The subscripts $+$ and $-$ distinguish the Walsh polynomials from
the reciprocal Walsh modes, with both matrices including all $k\neq0$.
We also define the canonical matrix
\begin{equation}
    [L]_{n,(j,k)}
    :=
    -\frac{\mathrm{i}s_j}{2\pi}
    \int_0^{2\pi}
    \frac{w_j'(t)e^{-\mathrm{i}kt}}
         {v_{n+1}(w_j(t))}\,dt,
    \qquad n\ge1.
    \label{eq:Lmatrix}
\end{equation}
The FW moment matrix is defined by
\begin{equation}
    [\mathcal M]_{(j,k),n}
    :=
    \int_0^{2\pi}
    F_n(z_j(t))e^{\mathrm{i}kt}\,dt.
    \label{eq:Tdef}
\end{equation}
For the source matrices, we use the incident mode as the row index and
the boundary Fourier mode as the column index:
\begin{align}
    [B^{(1)}]_{m,(j,k)}
    &:=
    \frac{1}{2\pi}
    \int_0^{2\pi}\p_{\nu_j}F_m(z_j(t))h_j(t)e^{-\mathrm{i}kt}\,dt,
    \label{eq:Bplus}\\
    [B^{(2)}]_{m,(j,k)}
    &:=
    \frac{1}{2\pi}
    \int_0^{2\pi}\p_{\nu_j}\overline{F_m(z_j(t))}h_j(t)e^{-\mathrm{i}kt}\,dt.
    \label{eq:Bminus}
\end{align}
With $\nu_j(z)$ regarded as a complex number, the chain rule gives
\[
    \p_{\nu_j}F_m(z)
    =\left.\frac{d}{d\varepsilon}
      F_m\bigl(z+\varepsilon\nu_j(z)\bigr)\right|_{\varepsilon=0}
    =F_m'(z)\nu_j(z).
\]
Taking complex conjugates gives
\[
    \p_{\nu_j}\overline{F_m}(z)
    =\overline{\p_{\nu_j}F_m(z)}
    =\overline{F_m'(z)}\,\overline{\nu_j(z)}.
\]

We use the complete matrices $\mathcal M$, $\mathcal M_+$,
$\mathcal M_-$, and $L$, with boundary modes ordered as
\[
    (-,\Gamma_1),\ldots,(-,\Gamma_N),\quad
    (+,\Gamma_1),\ldots,(+,\Gamma_N).
\]
Within each block, we use $n=|k|\in\NN$: the first $N$ blocks have
$k=-n$, and the last $N$ blocks have $k=n$.
On the positive Fourier indices $1,2,\ldots$, we define
\[
\mathcal N:=
\begin{pmatrix}1&0&0&\cdots\\0&2&0&\cdots\\0&0&3&\cdots\\\vdots&\vdots&\vdots&\ddots\end{pmatrix},
\qquad
I:=
\begin{pmatrix}1&0&0&\cdots\\0&1&0&\cdots\\0&0&1&\cdots\\\vdots&\vdots&\vdots&\ddots\end{pmatrix}.
\]
In the negative-then-positive ordering above, we define
\begin{equation}
\mathcal D:=
\left(\begin{array}{ccc|ccc}
-s_1\mathcal N& &0&0&\cdots&0\\
 &\ddots& &\vdots& &\vdots\\
0& &-s_N\mathcal N&0&\cdots&0\\ \hline
0&\cdots&0&s_1\mathcal N& &0\\
\vdots& &\vdots& &\ddots&\\
0&\cdots&0&0& &s_N\mathcal N
\end{array}\right).
\label{eq:frequency-matrix}
\end{equation}
Its entries are
$[\mathcal D]_{(j,k),(i,k')}=s_jk\,\delta_{ij}\delta_{kk'}$,
where $\delta_{ab}$ is the Kronecker delta.
Fourier reversal exchanges the negative and positive modes of the
same component:
\begin{equation}
\mathcal R:=
\left(\begin{array}{ccc|ccc}
0&\cdots&0&I& &0\\
\vdots& &\vdots& &\ddots&\\
0&\cdots&0&0& &I\\ \hline
I& &0&0&\cdots&0\\
 &\ddots& &\vdots& &\vdots\\
0& &I&0&\cdots&0
\end{array}\right),\qquad
[\mathcal R]_{(j,k),(i,k')}=\delta_{ij}\delta_{k,-k'}.
\label{eq:frequency-reversal}
\end{equation}
Thus, $[\mathcal R\mathcal M]_{(j,k),n}=[\mathcal M]_{(j,-k),n}$
and $\mathcal R^T=\mathcal R=\mathcal R^{-1}$.
In the same ordering, the material matrix is
\begin{equation}
\boldsymbol\Lambda:=
\left(\begin{array}{ccc|ccc}
\lambda_1I& &0&0&\cdots&0\\
 &\ddots& &\vdots& &\vdots\\
0& &\lambda_NI&0&\cdots&0\\ \hline
0&\cdots&0&\lambda_1I& &0\\
\vdots& &\vdots& &\ddots&\\
0&\cdots&0&0& &\lambda_NI
\end{array}\right).
\label{eq:material-matrix}
\end{equation}
Its entries are
$[\boldsymbol\Lambda]_{(j,k),(i,k')}=\lambda_j\delta_{ij}\delta_{kk'}$.
It commutes with $\mathcal D$ and $\mathcal R$, since each material
value is constant over the modes of its component, but it generally
does not commute with the NP matrix.
The infinite frequency multiplier $\mathcal D$ is unbounded on
unweighted square-summable sequences. In the source formulas below,
we apply it to the Fourier coefficients of analytic boundary traces.

\begin{proposition}[Factorization of the moment and source matrices]
\label{prop:TB-Grunsky}
The moment matrix satisfies
\begin{equation}
    \mathcal M=\mathcal M_++\mathcal M_-G^T.
    \label{eq:T-Grunsky}
\end{equation}
The source matrices are
\begin{equation}
    B^{(1)}=\frac{1}{2\pi}\mathcal M^T\mathcal R\mathcal D,
    \qquad
    B^{(2)}=-\frac{1}{2\pi}\overline{\mathcal M}^T\mathcal D.
    \label{eq:B-from-T}
\end{equation}
Consequently, both source matrices are explicit in $G$ and the
canonical lemniscatic data.
\end{proposition}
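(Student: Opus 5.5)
The plan is to evaluate both identities entrywise, using the boundary validity of the reciprocal expansion from Lemma~\ref{lem:analytic-boundary-traces} and one integration by parts in $t$.

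\emph{Moment factorization.} For $(j,k)\in\mathcal I$ and $n\ge1$, I will substitute $z_j(t)=\Psi(w_j(t))$ into \eqref{eq:FW-Grunsky-expansion}:
\[
F_n(z_j(t))=v_n(w_j(t))+\sum_{m\ge1}\frac{g_{nm}}{v_m(w_j(t))}.
\]
The points $w_j(t)$ lie on $\p\Lcal$, and Lemma~\ref{lem:analytic-boundary-traces} gives absolute and uniform convergence of the reciprocal series on $\{|V(w)|\ge\rho_3\mu\}\supset\p\Lcal$. This should justify multiplying by $e^{\mathrm{i}kt}$ and integrating termwise over $[0,2\pi]$. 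The result is
\[
[\mathcal M]_{(j,k),n}=[\mathcal M_+]_{(j,k),n}+\sum_{m}[\mathcal M_-]_{(j,k),m}\,g_{nm},
\]
and since $g_{nm}=[G^T]_{m,n}$, this is exactly \eqref{eq:T-Grunsky}. The $(m,n)$-index convention of $G$ (row indexes $F_m$, column indexes $1/v_n$) is what produces the transpose.

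\emph{Source matrices.} I will rewrite the integrand as a $t$-derivative. By the chain rule recorded above and \eqref{eq:normal},
\[
\p_{\nu_j}F_m(z_j(t))\,h_j(t)=F_m'(z_j(t))\nu_j(t)|z_j'(t)|=-\mathrm{i}s_jF_m'(z_j(t))z_j'(t)=-\mathrm{i}s_j\frac{d}{dt}F_m(z_j(t)).
\]
Conjugating gives the companion identity
\[
\p_{\nu_j}\overline{F_m}\,h_j=\mathrm{i}s_j\frac{d}{dt}\overline{F_m(z_j(t))}.
\]
The trace $t\mapsto F_m(z_j(t))$ is smooth and $2\pi$-periodic, since the interfaces are analytic. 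Integration by parts against $e^{-\mathrm{i}kt}$ therefore has no boundary terms, and the derivative contributes a factor $\mathrm{i}k$. I then expect
\[
[B^{(1)}]_{m,(j,k)}=\frac{s_jk}{2\pi}\int_0^{2\pi}F_m(z_j)e^{-\mathrm{i}kt}\,dt=\frac{s_jk}{2\pi}[\mathcal M]_{(j,-k),m}.
\]
For the conjugate case I expect
\[
[B^{(2)}]_{m,(j,k)}=-\frac{s_jk}{2\pi}\int_0^{2\pi}\overline{F_m(z_j)}e^{-\mathrm{i}kt}\,dt=-\frac{s_jk}{2\pi}\overline{[\mathcal M]_{(j,k),m}}.
\]

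It remains to match these with the matrix products, using $[\mathcal R]_{(i,k'),(l,k'')}=\delta_{il}\delta_{k',-k''}$ and the diagonal form of $\mathcal D$. These give $[\mathcal R\mathcal D]_{(i,k'),(j,k)}=\delta_{ij}\delta_{k',-k}\,s_jk$, so $[\mathcal M^T\mathcal R\mathcal D]_{m,(j,k)}=s_jk[\mathcal M]_{(j,-k),m}$. Similarly $[\overline{\mathcal M}^T\mathcal D]_{m,(j,k)}=s_jk\overline{[\mathcal M]_{(j,k),m}}$. Both formulas in \eqref{eq:B-from-T} then follow. The unbounded multiplier $\mathcal D$ causes no trouble: each entry of the product involves only one diagonal entry, and $\mathcal D$ acts on Fourier coefficients of analytic traces as noted before the proposition. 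Finally, combining with \eqref{eq:T-Grunsky} shows the source matrices are explicit in $G$, $\mathcal M_\pm$, $\mathcal R$, and $\mathcal D$.

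The step needing the most care is the sign and index bookkeeping: the orientation factor $s_j$, the conjugation flipping $\mathrm{i}\to-\mathrm{i}$, and the $k\mapsto-k$ reversal that separates $B^{(1)}$ (via $\mathcal R$) from $B^{(2)}$ (via conjugation). The analytic justification is already supplied by Lemma~\ref{lem:analytic-boundary-traces}.
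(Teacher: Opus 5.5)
Your proposal is correct and follows essentially the same route as the paper. You substitute the boundary-valid reciprocal expansion from Lemma~\ref{lem:analytic-boundary-traces} into \eqref{eq:Tdef}, identify $g_{nm}=[G^T]_{m,n}$, and for the source matrices rewrite $h_j\p_{\nu_j}F_m$ as $-\mathrm{i}s_j\frac{d}{dt}F_m(z_j(t))$, integrate by parts periodically, and match the resulting $k\mapsto -k$ reversal and conjugation with $\mathcal R$ and $\mathcal D$ entrywise.
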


\begin{proof}
Since $z_j(t)=\Psi(w_j(t))$, substitution of
\eqref{eq:FW-Grunsky-expansion} into \eqref{eq:Tdef} gives
\[
    [\mathcal M]_{(j,k),m}
    =[\mathcal M_+]_{(j,k),m}
    +\sum_{n=1}^{\infty}g_{mn}[\mathcal M_-]_{(j,k),n}.
\]
Termwise integration is justified by
Lemma~\ref{lem:analytic-boundary-traces}. The sum is the
$((j,k),m)$ entry of $\mathcal M_-G^T$, which proves
\eqref{eq:T-Grunsky}.

For the source matrices, the chain rule and \eqref{eq:normal} give
\[
    h_j(t)\p_{\nu_j}F_m(z_j(t))
    =-\mathrm{i}s_jF_m'(z_j(t))z_j'(t)
    =-\mathrm{i}s_j\frac{d}{dt}F_m(z_j(t)).
\]
Both $F_m(z_j(t))$ and $e^{-\mathrm{i}kt}$ are $2\pi$-periodic, so the
boundary term in integration by parts vanishes. Hence,
\[
    [B^{(1)}]_{m,(j,k)} =-\frac{\mathrm{i}s_j}{2\pi}\int_0^{2\pi} \frac{d}{dt}F_m(z_j(t))e^{-\mathrm{i}kt}\,dt =\frac{s_jk}{2\pi}\int_0^{2\pi} F_m(z_j(t))e^{-\mathrm{i}kt}\,dt =\frac{s_jk}{2\pi}[\mathcal M]_{(j,-k),m}.
\]
Right multiplication of $\mathcal M^T$ by $\mathcal R$ replaces its
column index $(j,k)$ by $(j,-k)$. The subsequent multiplication by
$\mathcal D$ multiplies column $(j,k)$ by $s_jk$, giving the first
formula in \eqref{eq:B-from-T}.

For the conjugate source, complex conjugation changes $-\mathrm{i}$ to $\mathrm{i}$,
so
\[
    h_j(t)\p_{\nu_j}\overline{F_m(z_j(t))}
    =\mathrm{i}s_j\frac{d}{dt}\overline{F_m(z_j(t))}.
\]
The same integration by parts therefore yields
\[
    [B^{(2)}]_{m,(j,k)} =-\frac{s_jk}{2\pi}\int_0^{2\pi} \overline{F_m(z_j(t))}e^{-\mathrm{i}kt}\,dt =-\frac{s_jk}{2\pi} \overline{\int_0^{2\pi}F_m(z_j(t))e^{\mathrm{i}kt}\,dt} =-\frac{s_jk}{2\pi}\overline{[\mathcal M]_{(j,k),m}}.
\]
This is the second formula in \eqref{eq:B-from-T}.
\end{proof}

It remains to express the NP matrix through the same coefficient
matrix. We use $K$ to represent $\mathcal K_\Gamma^*$ with respect to
\eqref{eq:zeta}, with the input mode indexing rows and the output mode
indexing columns. For $i,j\in\{1,\ldots,N\}$ and
$k,k'\in\ZZ\setminus\{0\}$, we define
\begin{equation}
[K]_{(j,k),(i,k')} =\frac{1}{(2\pi)^2} \int_0^{2\pi}\!\int_0^{2\pi} e^{-\mathrm{i}k't}e^{\mathrm{i}ks}h_{i}(t) \times \frac{(z_{i}(t)-z_j(s))\cdot\nu_{i}(t)}{|z_{i}(t)-z_j(s)|^2} \,ds\,dt.
    \label{eq:KWentry}
\end{equation}
The integral is understood in the principal-value sense when $i=j$.
The matrix $K$ acts on row coefficient vectors by right multiplication.
We define the canonical NP matrix $K_0$ using the same Fourier convention
on the curves $w_j(t)$. Their outward unit normals are
$-\mathrm{i}s_jw_j'(t)/|w_j'(t)|$. Thus, the canonical NP kernel, multiplied by
the output weight $|w_{i}'(t)|$, becomes
\[
    |w_{i}'(t)|\,
    \RePart\!\left[
        \frac{-\mathrm{i}s_{i}w_{i}'(t)/|w_{i}'(t)|}
             {w_{i}(t)-w_j(s)}
    \right]
    =\RePart\!\left[
        -\mathrm{i}s_{i}\frac{w_{i}'(t)}{w_{i}(t)-w_j(s)}
    \right].
\]
For the input mode $(j,k)$, the density is
$e^{\mathrm{i}ks}/|w_j'(s)|$, so its denominator cancels the source
arc-length factor $|w_j'(s)|\,ds$. Extracting output mode $(i,k')$
as in \eqref{eq:coeffextract} then gives
\begin{equation}
    [K_0]_{(j,k),(i,k')}
    :=
    \frac{1}{(2\pi)^2}
    \int_0^{2\pi}\!\int_0^{2\pi}
    e^{-\mathrm{i}k't}e^{\mathrm{i}ks}
    \RePart\!\left[-\mathrm{i}s_{i}\frac{w_{i}'(t)}{w_{i}(t)-w_j(s)}\right]
    \,ds\,dt,
    \label{eq:K0}
\end{equation}
where one factor $1/(2\pi)$ comes from the NP operator and the other
from Fourier coefficient extraction. When $i=j$, the integral is
understood in the principal-value sense, equivalently using the
continuous diagonal limit of the real kernel for these analytic curves.
It represents the NP operator on the $N$ canonical boundary
components, with input modes indexing rows and output modes indexing
columns.

\begin{theorem}[Factorization of the NP matrix]
\label{thm:K-Grunsky}
Assume that all $\Gamma_j$, $j=1,\ldots,N$, are analytic.  With the
boundary-mode ordering above,
\begin{equation}
    K=K_0+\frac{1}{4\pi}
    \left(\mathcal M_-G^T L+
    \mathcal R\overline{\left(\mathcal M_-G^T L\right)}\mathcal R\right).
    \label{eq:K-Grunsky}
\end{equation}
Hence, all dependence of the NP matrix on the noncanonical part of the
Walsh conformal map is carried by $G$, while $K_0$, $L$, and
$\mathcal M_-$ depend only on the canonical lemniscatic geometry.
\end{theorem}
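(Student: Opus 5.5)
The plan is to write the physical NP kernel, after multiplication by the output arc-length weight $h_i(t)$, as the real part of a holomorphic Cauchy-type kernel in the lemniscatic variable. Then I split that kernel with Proposition~\ref{prop:WG-kernel} into its canonical part and its Grunsky part. For $x=z_i(t)$ and $y=z_j(s)$, identifying vectors with complex numbers gives
\[
h_i(t)\frac{(x-y)\cdot\nu_i(t)}{|x-y|^2}
=\RePart\!\left[\frac{h_i(t)\,\nu_i(t)}{x-y}\right]
=\RePart\!\left[-\mathrm{i}s_i\frac{z_i'(t)}{z_i(t)-z_j(s)}\right].
\]
Since $z_i'(t)=\Psi'(w_i(t))w_i'(t)$, applying \eqref{eq:WG-kernel} with $w=w_i(t)$ and $\tau=w_j(s)$ yields
\[
-\mathrm{i}s_i\frac{z_i'(t)}{z_i(t)-z_j(s)}
=-\mathrm{i}s_i\frac{w_i'(t)}{w_i(t)-w_j(s)}
-\mathrm{i}s_i w_i'(t)\sum_{m,n\ge1}\frac{g_{mn}}{v_{m+1}(w_i(t))\,v_n(w_j(s))}.
\]
Lemma~\ref{lem:analytic-boundary-traces} makes this valid on the canonical boundary, including the diagonal $i=j$, $t=s$. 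The regularized kernel $\mathscr R$ is jointly continuous there, so the principal-value singularity sits entirely in the canonical term. Substituting the first term into \eqref{eq:KWentry} reproduces \eqref{eq:K0} exactly. This gives the $K_0$ contribution.

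For the Grunsky term, I write $\RePart X=\tfrac12(X+\overline X)$. By Lemma~\ref{lem:analytic-boundary-traces}, the double series converges absolutely and uniformly on $\p\Lcal\times\p\Lcal$, so termwise integration in $s$ and $t$ is allowed, and the double integral factorizes. The $X$ half gives
\[
\frac{1}{2}\cdot\frac{1}{(2\pi)^2}\sum_{m,n}g_{mn}
\left(\int_0^{2\pi}\frac{e^{\mathrm{i}ks}}{v_n(w_j(s))}ds\right)
\left(\int_0^{2\pi}\frac{-\mathrm{i}s_iw_i'(t)e^{-\mathrm{i}k't}}{v_{m+1}(w_i(t))}dt\right)
=\frac{1}{4\pi}\sum_{m,n}[\mathcal M_-]_{(j,k),n}\,g_{mn}\,[L]_{m,(i,k')}.
\]
This is exactly the entry $\frac{1}{4\pi}[\mathcal M_-G^TL]_{(j,k),(i,k')}$, using \eqref{eq:Tplus} and \eqref{eq:Lmatrix}.

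For the conjugate half, the weights $e^{\mathrm{i}ks}e^{-\mathrm{i}k't}$ are not conjugated. I therefore write $e^{\mathrm{i}ks}=\overline{e^{-\mathrm{i}ks}}$ and $e^{-\mathrm{i}k't}=\overline{e^{\mathrm{i}k't}}$. The integral then becomes the complex conjugate of the $X$ term with $(k,k')$ replaced by $(-k,-k')$, namely $\frac{1}{4\pi}\overline{[\mathcal M_-G^TL]_{(j,-k),(i,-k')}}$. Since $[\mathcal R]_{(j,k),(i,k')}=\delta_{ij}\delta_{k,-k'}$, this is the $((j,k),(i,k'))$ entry of $\frac{1}{4\pi}\mathcal R\,\overline{\mathcal M_-G^TL}\,\mathcal R$. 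Adding the three contributions gives \eqref{eq:K-Grunsky}. The closing remark, that all dependence on the noncanonical part of $\Psi$ enters through $G$, then follows because $K_0$, $L$ and $\mathcal M_-$ involve only $w_j$ and $v_n$.

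I expect the main obstacle to be justifying the interchanges on the diagonal block $i=j$, where the original kernel is only a principal value. I would handle it as follows. The physical kernel on analytic curves has a continuous diagonal limit. After subtracting the canonical kernel, which also has such a limit, the difference equals $\RePart[-\mathrm{i}s_iw_i'(t)\,\mathscr R(w_i(t),w_j(s))]$, which is continuous and bounded. Hence the principal value splits as the canonical principal value plus an ordinary integral. The ordinary integral is then expanded using the uniform bound \eqref{eq:kernel-Mtest-bound}, via Fubini and the Weierstrass test. A final point is to check that the orientation sign $s_i$ and the weight $h_i(t)$ are used consistently with \eqref{eq:normal} and \eqref{eq:coeffextract}.
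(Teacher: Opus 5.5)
Your proposal is correct and follows essentially the same route as the paper: you rewrite the weighted NP kernel as $\RePart[-\mathrm{i}s_i z_i'(t)/(z_i(t)-z_j(s))]$, split it with the Walsh--Grunsky kernel identity, identify the canonical part with $K_0$, factor the absolutely convergent Grunsky double series into $\frac{1}{4\pi}\mathcal M_-G^TL$, and obtain the $\mathcal R\overline{(\mathcal M_-G^TL)}\mathcal R$ term from the conjugate half of the real part. For the diagonal block, your argument is that the regularized kernel $\mathscr R$ is bounded, so the principal value affects only the canonical term; this is the same justification the paper gives through its strip estimate.
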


\begin{proof}
For complex numbers representing planar vectors, the dot product is
$a\cdot b=\RePart(a\overline b)$. Thus,
\[
    \frac{(z_{i}-z_j)\cdot\nu_{i}}{|z_{i}-z_j|^2}
    =\RePart\!\left[\frac{\nu_{i}}{z_{i}-z_j}\right].
\]
Also, $z_{i}=\Psi\circ w_{i}$ and \eqref{eq:normal} imply
\[
    h_{i}(t)\nu_{i}(t)
    =-\mathrm{i}s_{i}\Psi'(w_{i}(t))w_{i}'(t).
\]
Applying \eqref{eq:WG-kernel} with $w=w_{i}(t)$ and
$\tau=w_j(s)$ now gives
\[
    h_{i}(t)\frac{\nu_{i}(t)}{z_{i}(t)-z_j(s)}
    =-\mathrm{i}s_{i}w_{i}'(t)
    \left[
        \frac{1}{w_{i}(t)-w_j(s)}
        +\sum_{m,n\ge1}
        \frac{g_{mn}}{v_{m+1}(w_{i}(t))v_n(w_j(s))}
    \right].
\]
The real part of the first term gives $K_0$.
We justify the diagonal limit for the correction separately.
For $i=j$, its integrand in a fixed Fourier matrix entry is
\[
    r_i(t,s):=-\mathrm{i}s_iw_i'(t)
    \mathscr R(w_i(t),w_i(s))
    e^{\mathrm{i}ks-\mathrm{i}k't}.
\]
By Lemma~\ref{lem:analytic-boundary-traces} and
\eqref{eq:kernel-Mtest-bound}, the sum of the supremum norms of
its Walsh--Grunsky summands is bounded by a constant $C_i<\infty$.
In particular, $|r_i(t,s)|\le C_i$, including the diagonal.
For $0<\varepsilon<\pi$, the omitted periodic strip
$\{(t,s):\operatorname{dist}_{2\pi}(t,s)<\varepsilon\}$ has area
$4\pi\varepsilon$, so
\[
    \left|\iint_{\operatorname{dist}_{2\pi}(t,s)<\varepsilon}
    r_i(t,s)\,ds\,dt\right|
    \le4\pi C_i\varepsilon\longrightarrow0.
\]
The same summable bound permits termwise integration before or after
this limit. The canonical real kernel has the continuous diagonal
limit specified in \eqref{eq:K0}. Hence, the limit agrees with the
principal-value definition and produces no extra diagonal or jump
term. For $i\ne j$, the boundaries are separated, and the series converges
absolutely and uniformly on their product. We may therefore integrate
term by term.

For an input mode $(j,k)$ and an output mode $(i,k')$, the
contribution of the complex correction before taking its real part is
\begin{align*}
    &\frac{1}{(2\pi)^2}\sum_{m,n\ge1}g_{mn}
    \left(\int_0^{2\pi}\frac{e^{\mathrm{i}ks}}{v_n(w_j(s))}\,ds\right)
    \left(-\mathrm{i}s_{i}\int_0^{2\pi}
    \frac{w_{i}'(t)e^{-\mathrm{i}k't}}{v_{m+1}(w_{i}(t))}\,dt\right)\\
    &=\frac{1}{2\pi}\sum_{m,n\ge1} [\mathcal M_-]_{(j,k),n}\,g_{mn}\,[L]_{m,(i,k')} =\frac{1}{2\pi} [\mathcal M_-G^T L]_{(j,k),(i,k')}.
\end{align*}
The last equality uses \eqref{eq:Tplus} and \eqref{eq:Lmatrix}.

To take the real part of the kernel, we average this contribution
with that of the conjugate kernel. Conjugating the Fourier factor
$e^{-\mathrm{i}k't}e^{\mathrm{i}ks}$ reverses both $k$ and $k'$. Thus, the conjugate
kernel contributes
\[
    \frac{1}{2\pi}
    \overline{[\mathcal M_-G^T L]_{(j,-k),(i,-k')}}
    =\frac{1}{2\pi}
    [\mathcal R\overline{\left(\mathcal M_-G^T L\right)}\mathcal R]_{(j,k),(i,k')}.
\]
Averaging the two contributions supplies the factor $1/(4\pi)$.
Adding $K_0$ proves \eqref{eq:K-Grunsky}.
\end{proof}

\begin{remark}[Why the canonical term is necessary]
\label{rem:canonical-K}
If $\Psi(w)=w$ on the lemniscatic exterior, then
$F_n(\Psi(w))=v_n(w)$ and therefore $G=0$. Nevertheless,
$K=K_0$ is generally nonzero because distinct canonical boundary
components interact when $N\ge2$.
The necessity of $K_0$ here is relative to the coefficient definition
\eqref{eq:FW-Grunsky-expansion}: $G$ records the reciprocal remainder
of $F_m\circ\Psi$, whereas $K_0$ records the NP interactions of the
canonical boundaries themselves.
One could include $K_0$ in a larger generalized Grunsky object, but
that object would not be the matrix $G$ defined here.
Keeping $K_0$ explicit distinguishes canonical interaction from
conformal deformation without asserting an obstruction to other
operator formulations, such as \cite{RadnellSchippersStaubach2020}.
\end{remark}

Under \eqref{eq:zeta}, the componentwise zero-charge Sobolev space
corresponds, with equivalent norms, to the Hilbert space of row sequences
\[
    X:=\left\{\boldsymbol c:
    \|\boldsymbol c\|_X^2
    :=\sum_{j=1}^N\sum_{k\ne0}\frac{|c_{j,k}|^2}{|k|}<\infty\right\}.
\] The inverse $(\boldsymbol\Lambda-K)^{-1}$
on this space represents the bounded resolvent
$(\Lambda_\Gamma-\mathcal K_\Gamma^*)^{-1}$ acting on row coefficient
sequences from the zero-charge boundary space through \eqref{eq:zeta}.
Indeed, if $\boldsymbol c$ is a row vector of density coefficients and
$\boldsymbol b$ contains the source coefficients, then
$\boldsymbol c(\boldsymbol\Lambda-K)=\boldsymbol b$.
Its finite-section counterpart is evaluated by a matrix solve in the
numerical section. In particular, no commutation of $\boldsymbol\Lambda$
with $K$ is assumed.

\paragraph{Fourier finite sections.}
We denote by $\Pi_M$ the orthogonal projection in $X$ onto the modes
$1\le|k|\le M$, and set $A:=\boldsymbol\Lambda-K$.
Compactness of $K$ and strong convergence of $\Pi_M$ and its adjoint
to the identity give
\[
    \varepsilon_M:=\|K-\Pi_MK\Pi_M\|_{\mathcal B(X)}
    \longrightarrow0.
\]
Here, compactness follows from the boundary operator in
Proposition~\ref{prop:material-invertibility} and the Fourier isomorphism.
For $C_A:=\|A^{-1}\|_{\mathcal B(X)}$, the extended operator
$\widetilde A_M:=\boldsymbol\Lambda-\Pi_MK\Pi_M$ is invertible
whenever $C_A\varepsilon_M<1$. A Neumann-series argument and the
resolvent identity give
\[
    \|\widetilde A_M^{-1}\|
    \le\frac{C_A}{1-C_A\varepsilon_M},
    \qquad
    \|\widetilde A_M^{-1}-A^{-1}\|
    \le\frac{C_A^2\varepsilon_M}{1-C_A\varepsilon_M},
\]
where both norms are on $X$.
Since $\boldsymbol\Lambda$ commutes with $\Pi_M$, the restriction of
$\widetilde A_M$ to $\Pi_M X$ is the finite section
$A_M=\Pi_M A\Pi_M|_{\Pi_M X}$, while $\widetilde A_M$ equals
$\boldsymbol\Lambda$ on the complement of $\Pi_M X$. Thus, $A_M$ is the
exact compression of $A$ to the retained Fourier modes.
Consequently, $A_M$ is invertible for all sufficiently large $M$, and its
inverses are uniformly bounded.
For each fixed source row $\boldsymbol b\in X$, the finite solution
$\boldsymbol c_M=(\boldsymbol b\Pi_M)A_M^{-1}$, extended by zero,
satisfies
\[
    \|\boldsymbol c_M-\boldsymbol bA^{-1}\|_X
    \le
    \frac{C_A\|\boldsymbol b(I-\Pi_M)\|_X
    +C_A^2\varepsilon_M\|\boldsymbol b\|_X}
    {1-C_A\varepsilon_M}
    \longrightarrow0.
\]
This is a qualitative result for exact matrix entries at fixed geometry
and contrast. It gives no rate in $M$ and does not control additional
Grunsky truncation, quadrature, or outgoing-truncation errors.

Before substituting the Grunsky formulas, the definitions of the source,
resolvent, and moment matrices give the exact coefficient identity
\begin{equation}
    \mathbb F^{(1)}
    =
    B^{(1)}
    (\boldsymbol\Lambda-K)^{-1}
    \mathcal M,
    \qquad
    \mathbb F^{(2)}
    =
    B^{(2)}
    (\boldsymbol\Lambda-K)^{-1}
    \mathcal M.
    \label{eq:TRB}
\end{equation}
The inverse of a finite section must be distinguished from the
corresponding section of the infinite inverse.
For each fixed incident index $m$ and $a=1,2$, the row
$B^{(a)}_{m,\cdot}$ has rapidly decreasing Fourier coefficients and
belongs to the weighted space above. Applying the bounded resolvent
gives a density row $\boldsymbol c$ in that space. For every fixed
moment index $n$, its pairing with the $n$th column of $\mathcal M$
is absolutely convergent, since
\[
    \sum_{j=1}^N\sum_{k\ne0}
    \bigl|c_{j,k}[\mathcal M]_{(j,k),n}\bigr|
    \le
    \left(\sum_{j=1}^N\sum_{k\ne0}
    \frac{|c_{j,k}|^2}{|k|}\right)^{1/2}
    \left(\sum_{j=1}^N\sum_{k\ne0}
    |k|\,|[\mathcal M]_{(j,k),n}|^2\right)^{1/2}
    <\infty.
\]
The second factor is finite because $F_n(z_j(t))$ is analytic in $t$.
Thus, \eqref{eq:TRB} and the FWPT and CGPT factorizations below are
identities for each fixed pair of incident and moment indices.
In \eqref{eq:B-from-T}, $\mathcal D$ acts on the analytic trace
coefficients to form the source row before the resolvent is applied.
The Walsh-index sums involving $G$, $\mathcal M_\pm$, and $L$ in
\eqref{eq:T-Grunsky} and \eqref{eq:K-Grunsky} are justified by the
absolute and uniform convergence in Lemma~\ref{lem:analytic-boundary-traces}.
These coefficient identities do not require the individual factors to
be bounded on unweighted sequences of Walsh coefficients.

Substituting the preceding Grunsky identities into the
source--resolvent--moment formula gives the main factorization.

\begin{theorem}[Factorization of the FWPTs]
\label{thm:FWPT-Grunsky}
Assume that all $\Gamma_j$, $j=1,\ldots,N$, are analytic.  With the
FWPT convention
$\mathbb F^{(a)}=(\mathbb F_{mn}^{(a)})_{m,n\ge1}$ for $a=1,2$,
\begin{equation}
    \mathbb F^{(1)} =\frac{1}{2\pi}\mathcal M^T\mathcal R\mathcal D (\boldsymbol\Lambda-K)^{-1}\mathcal M,\qquad \mathbb F^{(2)} =-\frac{1}{2\pi}\overline{\mathcal M}^T\mathcal D (\boldsymbol\Lambda-K)^{-1}\mathcal M. \label{eq:FWPT-Grunsky1}
\end{equation}
Here, $\mathcal M=\mathcal M_++\mathcal M_-G^T$, $K$ is given by
\eqref{eq:K-Grunsky}, and $\boldsymbol\Lambda$ by
\eqref{eq:material-matrix}. All geometric matrices are independent
of $\sigma_m,\sigma_1,\ldots,\sigma_N$.
\end{theorem}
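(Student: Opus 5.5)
The plan is to derive \eqref{eq:FWPT-Grunsky1} by substituting the two source formulas of Proposition~\ref{prop:TB-Grunsky} into the exact source--resolvent--moment identity \eqref{eq:TRB}. That identity is checked entrywise for fixed $(m,n)$ before any substitution. Fix an incident index $m$ and $a\in\{1,2\}$, and write $\boldsymbol\varphi^{(a)}_m=(\Lambda_\Gamma-\mathcal K_\Gamma^*)^{-1}\boldsymbol g$, where $\boldsymbol g=\boldsymbol g_{F_m}$ or $\boldsymbol g_{\overline{F_m}}$. By Lemma~\ref{lem:zerocharge}, the data and the solution have zero charge on each component. So $\boldsymbol\varphi^{(a)}_m$ admits the expansion \eqref{eq:zeta}, with coefficients $c_{j,k}$ given by \eqref{eq:coeffextract}.

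Step one identifies the source row. Extracting the coefficients of $h_j\,\p_{\nu_j}F_m$ as in \eqref{eq:coeffextract} gives exactly the row $B^{(1)}_{m,\cdot}$ of \eqref{eq:Bplus}, and likewise $B^{(2)}_{m,\cdot}$ from \eqref{eq:Bminus}. Because $F_m\circ z_j$ is analytic and periodic, these rows decay rapidly and lie in $X$.

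Step two rewrites the boundary equation in coefficients. Using the convention for $K$ in \eqref{eq:KWentry}, equation \eqref{eq:blockres} becomes $\boldsymbol c(\boldsymbol\Lambda-K)=B^{(a)}_{m,\cdot}$. By Proposition~\ref{prop:material-invertibility} and the Fourier isomorphism with $X$, this yields $\boldsymbol c=B^{(a)}_{m,\cdot}(\boldsymbol\Lambda-K)^{-1}$ in $X$. No commutation of $\boldsymbol\Lambda$ with $K$ is needed.

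Step three computes the moment. Insert \eqref{eq:zeta} into the definition \eqref{eq:FWPT1}--\eqref{eq:FWPT2}. Since $ds=h_j\,dt$, the factor $h_j$ cancels, and
\[
\mathbb F^{(a)}_{mn}=\sum_{j=1}^N\sum_{k\neq0}c_{j,k}\int_0^{2\pi}F_n(z_j(t))e^{\mathrm{i}kt}\,dt=\sum_{(j,k)}c_{j,k}[\mathcal M]_{(j,k),n}.
\]
The exchange of sum and integral is justified by the weighted Cauchy--Schwarz bound stated before the theorem. That bound pairs the $X$-norm of $\boldsymbol c$ with $\sum_{j,k}|k||[\mathcal M]_{(j,k),n}|^2<\infty$, and the latter holds by analyticity of $F_n\circ z_j$. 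Convergence of the Fourier series \eqref{eq:zeta} is then needed only in the $H^{-1/2}$ duality sense. This proves \eqref{eq:TRB} entrywise.

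Step four substitutes $B^{(1)}=\frac1{2\pi}\mathcal M^T\mathcal R\mathcal D$ and $B^{(2)}=-\frac1{2\pi}\overline{\mathcal M}^T\mathcal D$ from \eqref{eq:B-from-T}. It also substitutes $\mathcal M=\mathcal M_++\mathcal M_-G^T$ from \eqref{eq:T-Grunsky} and $K$ from Theorem~\ref{thm:K-Grunsky}. The Walsh-index sums in these substitutions converge absolutely by Lemma~\ref{lem:analytic-boundary-traces}, so they are legitimate entrywise.

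The independence of the geometric matrices from the conductivities is read off from the definitions. $\mathcal M_\pm$, $L$, $K_0$, $G$, $\mathcal R$, and $\mathcal D$ are built only from $\Psi$, the curves $w_j$, the Walsh sequence, and the signs $s_j$. The conductivities enter only through $\boldsymbol\Lambda$ in \eqref{eq:material-matrix}.

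The main obstacle is the bookkeeping in steps three and four rather than any hard estimate. The product $\mathcal M^T\mathcal R\mathcal D(\boldsymbol\Lambda-K)^{-1}\mathcal M$ must be read with the correct grouping. First, $\mathcal D$ acts on the rapidly decaying row $(\mathcal M^T\mathcal R)_{m,\cdot}$ to form a row in $X$. Then the bounded resolvent on $X$ is applied. Finally, the result is paired absolutely with a column of $\mathcal M$. Since $\mathcal D$ is unbounded, I would state explicitly that the identity holds for each fixed $(m,n)$ with this association, and not as a product of bounded operators on unweighted $\ell^2$.
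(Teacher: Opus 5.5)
Your proposal is correct and follows the paper's route: the paper proves the theorem by substituting the source formulas \eqref{eq:B-from-T} into the coefficient identity \eqref{eq:TRB}, and justifies \eqref{eq:TRB} entrywise in the preceding paragraph with the same ingredients you use (source rows lying in $X$, the bounded resolvent, the weighted Cauchy--Schwarz pairing with columns of $\mathcal M$, and $\mathcal D$ applied to analytic trace coefficients before the resolvent). Your version writes out the coefficient derivation of \eqref{eq:TRB} and the independence from the conductivities more explicitly than the paper does, but the argument is the same.
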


\begin{proof}
Substitution of \eqref{eq:B-from-T} into \eqref{eq:TRB}
proves both formulas.
\end{proof}

Combining Theorem~\ref{thm:FWPT-Grunsky} with
Theorem~\ref{thm:CGPTFWPT} gives the corresponding CGPT
factorization:
\begin{align}
    \mathbb N^{(1)}
    &=\frac{1}{2\pi}P^{-1}\mathcal M^T\mathcal R\mathcal D
    (\boldsymbol\Lambda-K)^{-1}\mathcal M(P^{-1})^T,
    \label{eq:CGPT-Grunsky1}\\
    \mathbb N^{(2)}
    &=-\frac{1}{2\pi}(\overline P)^{-1}\overline{\mathcal M}^T\mathcal D
    (\boldsymbol\Lambda-K)^{-1}\mathcal M(P^{-1})^T.
    \label{eq:CGPT-Grunsky2}
\end{align}

\begin{corollary}[Simply connected reduction]
\label{cor:simply-connected}

Let $D=D_1$ be a bounded simply connected inclusion with analytic boundary,
conductivity $\sigma_1$, and background conductivity $\sigma_m$. Set
\[
\lambda=\frac{\sigma_1+\sigma_m}{2(\sigma_1-\sigma_m)},
\qquad |\lambda|>\frac12.
\]
Choose the canonical exterior $\{|w|>\mu\}$, so that
$v_n(w)=w^n$ and the FW polynomials reduce to the classical Faber
polynomials,
\[
F_m(\Psi(w))
=
w^m+\sum_{n=1}^{\infty}g_{mn}w^{-n}.
\]
Define
\begin{equation}
\widehat G
:=
\mathcal N^{-1/2}\mu^{-\mathcal N}
G\mu^{-\mathcal N}\mathcal N^{1/2},
\qquad
[\widehat G]_{mn}
=
\sqrt{\frac{n}{m}}\frac{g_{mn}}{\mu^{m+n}}.
\label{eq:single-normalized-G}
\end{equation}
Then $\widehat G$ is symmetric, and the CGPT matrices satisfy
\begin{align}
\mathbb N^{(1)}
&=
4\pi P^{-1}\mu^{\mathcal N}\mathcal N^{1/2}\widehat G
\left[
I+(1-4\lambda^2)
(4\lambda^2I-\overline{\widehat G}\widehat G)^{-1}
\right]
\mathcal N^{1/2}\mu^{\mathcal N}(P^{-1})^T,
\label{eq:single-CGPT1}\\
\mathbb N^{(2)}
&=
8\pi\lambda(\overline P)^{-1}\mu^{\mathcal N}\mathcal N^{1/2}
\left[
I+(1-4\lambda^2)
(4\lambda^2I-\overline{\widehat G}\widehat G)^{-1}
\right]
\mathcal N^{1/2}\mu^{\mathcal N}(P^{-1})^T.
\label{eq:single-CGPT2}
\end{align}
Here, $I$ acts on the positive Fourier modes.
These formulas recover \cite[Theorem~4.2]{ChoiHelsingKangLim2023},
with its conformal radius $\gamma$ and symmetrized Grunsky matrix
corresponding to $\mu$ and $\widehat G$, respectively.

\end{corollary}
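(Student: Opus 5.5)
The plan is to specialize Theorem~\ref{thm:FWPT-Grunsky} to $N=1$ and then reduce everything to diagonal Fourier matrices, so the resolvent becomes an explicit block inverse.

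\emph{Canonical data.} With $\chi_1(\zeta)=\mu\zeta$ we have $w_1(t)=\mu e^{\mathrm{i}t}$, $s_1=1$, and $v_n(w)=w^n$. Hence $[\mathcal M_+]_{k,n}=2\pi\mu^n\delta_{k,-n}$ and $[\mathcal M_-]_{k,m}=2\pi\mu^{-m}\delta_{k,m}$. In the negative/positive ordering this gives
\[
\mathcal M=2\pi\begin{pmatrix}\mu^{\mathcal N}\\ \mu^{-\mathcal N}G^T\end{pmatrix}.
\]
A direct computation also gives $[L]_{n,k}=-\mathrm{i}\cdot\mathrm{i}\mu^{-n}\delta_{k,n}=\mu^{-n}\delta_{k,n}$. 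On a single circle the canonical NP kernel is the constant $1/(4\pi\mu)$ times arclength. It therefore annihilates every mode $k\neq0$, so $K_0=0$.

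Theorem~\ref{thm:K-Grunsky} then gives
\[
K=\frac{1}{4\pi}\Bigl(\mathcal M_-G^TL+\mathcal R\overline{\mathcal M_-G^TL}\mathcal R\Bigr).
\]
Here $\mathcal M_-G^TL$ maps the positive input modes to the positive output modes through $2\pi\mu^{-\mathcal N}G^T\mu^{-\mathcal N}$, and the reflected term maps negative to negative. So $K$ is block diagonal. Following the classical Faber case, I expect the normalization to produce off-diagonal blocks, and I will check the block structure carefully against the convention that input modes index rows. In either case each block is a scalar multiple of $\mu^{-\mathcal N}G^T\mu^{-\mathcal N}$ or of its conjugate.

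\emph{Symmetry of $\widehat G$.} I will use the double contour formula \eqref{eq:g-contour-collar}. Alternatively, I will use the classical Grunsky identity: for $N=1$ the kernel in Proposition~\ref{prop:WG-kernel} is $\log\frac{\Psi(w)-\Psi(\tau)}{w-\tau}$ differentiated in $w$. That logarithm is symmetric in $(w,\tau)$, so its expansion in $w^{-m}\tau^{-n}$ has symmetric coefficients. Writing it via Proposition~\ref{prop:logFW} with $Q_m(w)=w^{-m}/m$ gives $g_{mn}/n=g_{nm}/m$. This is exactly the symmetry of $\sqrt{n/m}\,g_{mn}$, and hence of $\widehat G$.

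\emph{Resolvent and assembly.} I will conjugate by the diagonal scalings $\mu^{\pm\mathcal N}\mathcal N^{\pm1/2}$, which commute with $\boldsymbol\Lambda=\lambda I$, $\mathcal R$, and $\mathcal D$. After this conjugation, $\boldsymbol\Lambda-K$ becomes a $2\times2$ block operator with diagonal entries $\lambda I$ and off-diagonal entries $-\tfrac12\widehat G$ and $-\tfrac12\overline{\widehat G}$, possibly with a factor from the weights; I will verify this factor. The Schur complement is $\lambda I-\tfrac{1}{4\lambda}\overline{\widehat G}\widehat G$, which up to scaling is $4\lambda^2I-\overline{\widehat G}\widehat G$. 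Its invertibility follows from Proposition~\ref{prop:material-invertibility} through the isomorphism with $X$.

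Multiplying out $\frac{1}{2\pi}\mathcal M^T\mathcal R\mathcal D(\boldsymbol\Lambda-K)^{-1}\mathcal M$ then yields terms of the form $\widehat G(4\lambda^2-\overline{\widehat G}\widehat G)^{-1}$ and $(4\lambda^2-\overline{\widehat G}\widehat G)^{-1}$. I will use the identity
\[
4\lambda^2(4\lambda^2-A)^{-1}=I+A(4\lambda^2-A)^{-1}
\]
together with the symmetry of $\widehat G$ to rewrite these in the bracketed form $I+(1-4\lambda^2)(\cdots)^{-1}$. The FWPT formulas are then converted to CGPTs with Theorem~\ref{thm:CGPTFWPT}, which supplies the outer factors $P^{-1}$ and $(P^{-1})^T$.

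\emph{Main obstacle.} The hardest step is the bookkeeping of signs and scalings, namely getting exactly $4\pi$ and $8\pi\lambda$ and the combination $1-4\lambda^2$. I will fix these by testing the final formula on a disk, where $G=0$. There $\mathbb N^{(1)}$ must vanish and $\mathbb N^{(2)}_{mm}$ must equal the known value $2\pi m\mu^{2m}/\lambda$, which matches $8\pi\lambda(1+(1-4\lambda^2)/(4\lambda^2))m\mu^{2m}$. I will use this check to correct any convention errors.
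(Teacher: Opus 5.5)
Your route is the paper's: specialize Theorem~\ref{thm:FWPT-Grunsky} to $N=1$, use $K_0=0$, conjugate by $\mathcal N^{1/2}$, invert a $2\times2$ block operator, and apply Theorem~\ref{thm:CGPTFWPT}, with symmetry obtained by integrating \eqref{eq:WG-kernel} from infinity. However, your computation of $L$ is wrong, and the error invalidates your description of $K$.

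\textbf{The matrix $L$ and the block structure of $K$.} Unlike $\mathcal M_\pm$, the matrix $L$ carries $e^{-\mathrm{i}kt}$. Since $w_1'(t)/w_1(t)^{n+1}=\mathrm{i}\mu^{-n}e^{-\mathrm{i}nt}$, you get $[L]_{n,k}=\mu^{-n}\delta_{k,-n}$. Thus $L=\begin{pmatrix}\mu^{-\mathcal N}&0\end{pmatrix}$ is supported on the \emph{negative} output modes. Hence $\mathcal M_-G^TL$ sends positive inputs to negative outputs, the reflected conjugate term sends negative inputs to positive outputs, and $K$ is purely off-diagonal:
\[
\boldsymbol\Lambda-K=\begin{pmatrix}\lambda I&-\tfrac12\mu^{-\mathcal N}\overline G^T\mu^{-\mathcal N}\\-\tfrac12\mu^{-\mathcal N}G^T\mu^{-\mathcal N}&\lambda I\end{pmatrix},\qquad \mu^{-\mathcal N}G^T\mu^{-\mathcal N}=\mathcal N^{-1/2}\widehat G\,\mathcal N^{1/2}.
\]
The last identity uses $\widehat G^T=\widehat G$. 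After conjugation the blocks are exactly $-\tfrac12\overline{\widehat G}$ and $-\tfrac12\widehat G$, with no extra weight. Your ``in either case'' is not harmless. A block-diagonal $K$ would produce resolvents such as $(\lambda I-\tfrac12\widehat G)^{-1}$, and $\overline{\widehat G}\widehat G$ would never appear. The disk test cannot catch this. With $G=0$ you get $K=0$ under either structure, and $\mathbb N^{(1)}=0$ whatever its prefactor. The test therefore only checks the $G$-independent part of \eqref{eq:single-CGPT2} and leaves the factor $4\pi\widehat G$ in \eqref{eq:single-CGPT1} untested; an ellipse, where $g_{11}\ne0$, would test it.

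\textbf{The corrected algebra and a second slip.} With the correct structure the algebra closes directly. Write $A=\widehat G$, let $T$ be the conjugated block operator, and set $R=(4\lambda^2I-\overline AA)^{-1}$. The factorization becomes
\[
\mathbb F^{(1)}=2\pi\mu^{\mathcal N}\mathcal N^{1/2}\begin{pmatrix}-A&I\end{pmatrix}T^{-1}\begin{pmatrix}I\\A\end{pmatrix}\mathcal N^{1/2}\mu^{\mathcal N},\qquad
\mathbb F^{(2)}=2\pi\mu^{\mathcal N}\mathcal N^{1/2}\begin{pmatrix}I&-\overline A\end{pmatrix}T^{-1}\begin{pmatrix}I\\A\end{pmatrix}\mathcal N^{1/2}\mu^{\mathcal N}.
\]
Using $\overline A(4\lambda^2I-A\overline A)^{-1}=R\overline A$, one has $T^{-1}=\begin{pmatrix}4\lambda R&2R\overline A\\2AR&4\lambda(4\lambda^2I-A\overline A)^{-1}\end{pmatrix}$. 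This reduces the middle factors to $2A(I-\overline AA)R$ and $4\lambda(I-\overline AA)R$, and $(I-\overline AA)R=I+(1-4\lambda^2)R$. Separately, fix the index slip in the symmetry step. The coefficient of $w^{-m}\tau^{-n}$ in $\log\frac{\Psi(w)-\Psi(\tau)}{w-\tau}$ is $-g_{mn}/m$, because $Q_m(w)=w^{-m}/m$ multiplies $F_m(\Psi(\tau))$. The identity is therefore $g_{mn}/m=g_{nm}/n$. Your version $g_{mn}/n=g_{nm}/m$ would make $\sqrt{m/n}\,g_{mn}$ symmetric, not $\widehat G$.
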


\begin{proof}
Integrating \eqref{eq:WG-kernel} in $w$ from infinity, with
$v_n(w)=w^n$, gives
\[
\log\!\left(\frac{\Psi(w)-\Psi(\tau)}{w-\tau}\right)
=
-\sum_{m,n\ge1}\frac{g_{mn}}{m}w^{-m}\tau^{-n}.
\]
The left-hand side is symmetric in $w$ and $\tau$. Hence,
$g_{mn}/m=g_{nm}/n$, and therefore
$\widehat G^T=\widehat G$.

We parameterize the canonical circle by
$w(t)=\mu e^{\mathrm{i}t}$. Since
\[
\RePart\!\left[-\mathrm{i}\frac{w'(t)}{w(t)-w(s)}\right]
=\frac12,
\qquad t\ne s,
\]
the canonical NP operator vanishes on the nonzero Fourier modes, so
$K_0=0$ on the zero-charge space. Fourier orthogonality gives
\[
\mathcal M_+
=
2\pi
\begin{pmatrix}
\mu^{\mathcal N}\\
0
\end{pmatrix},
\qquad
\mathcal M_-
=
2\pi
\begin{pmatrix}
0\\
\mu^{-\mathcal N}
\end{pmatrix},
\qquad
L=
\begin{pmatrix}
\mu^{-\mathcal N}&0
\end{pmatrix}.
\]
Thus,
\[
\mathcal M
=
2\pi
\begin{pmatrix}
\mu^{\mathcal N}\\
\mu^{-\mathcal N}G^T
\end{pmatrix},
\qquad
K
=
\frac12
\begin{pmatrix}
0&
\mu^{-\mathcal N}\overline G^T\mu^{-\mathcal N}\\
\mu^{-\mathcal N}G^T\mu^{-\mathcal N}&0
\end{pmatrix}.
\]

Using \eqref{eq:single-normalized-G} and
$\widehat G^T=\widehat G$, we have
\[
\mu^{-\mathcal N}G^T\mu^{-\mathcal N}
=
\mathcal N^{-1/2}\widehat G\mathcal N^{1/2}.
\]
Substitution into the FWPT factorization
\eqref{eq:FWPT-Grunsky1}, followed by the corresponding
two-by-two block inversion, yields
\[
\mathbb F^{(1)}
=
4\pi\mu^{\mathcal N}\mathcal N^{1/2}\widehat G
\left[
I+(1-4\lambda^2)
(4\lambda^2I-\overline{\widehat G}\widehat G)^{-1}
\right]
\mathcal N^{1/2}\mu^{\mathcal N},
\]
and
\[
\mathbb F^{(2)}
=
8\pi\lambda\mu^{\mathcal N}\mathcal N^{1/2}
\left[
I+(1-4\lambda^2)
(4\lambda^2I-\overline{\widehat G}\widehat G)^{-1}
\right]
\mathcal N^{1/2}\mu^{\mathcal N}.
\]
Finally, Theorem~\ref{thm:CGPTFWPT} gives
\[
\mathbb N^{(1)}
=
P^{-1}\mathbb F^{(1)}(P^{-1})^T,
\qquad
\mathbb N^{(2)}
=
(\overline P)^{-1}\mathbb F^{(2)}(P^{-1})^T,
\]
which proves \eqref{eq:single-CGPT1}--\eqref{eq:single-CGPT2}.
\end{proof}

\paragraph{Dependence on conformal data.}
The factorization above separates canonical and noncanonical geometric
information.  The lemniscatic parameters and the Walsh sequence determine
the canonical objects $v_n$, $\mathcal M_+$, $\mathcal M_-$, $L$, and $K_0$,
with $\mathcal D$ fixed by the Fourier indices and boundary orientations
and $\mathcal R$ by index reversal,
whereas the physical map $\Psi$ determines the FW polynomials,
the triangular coefficient matrix $P$, and the Walsh--Grunsky coefficient
matrix $G$.  Thus, the forward dependence is summarized by
\begin{equation}
    (\mathcal L,\{\beta_n\},\Psi;\sigma_m,\sigma_1,\ldots,\sigma_N)
    \longrightarrow
    (P,G,K,\boldsymbol\Lambda)
    \longrightarrow
    \bigl(\mathbb F^{(1)},\mathbb F^{(2)}\bigr)
    \longrightarrow
    \bigl(\mathbb N^{(1)},\mathbb N^{(2)}\bigr).
    \label{eq:forwardchain}
\end{equation}
\section{Numerical Examples}
\label{sec:numerics}

We illustrate the FW representation with three configurations and compare
it with independent Nystr\"om solutions. The background conductivity is
$\sigma_m=1$. The incident field is
\[
H(x)=x_1\cos\theta+x_2\sin\theta.
\]
The FW fields are evaluated from \eqref{eq:GME}.

On an exterior grid $\Omega_{\mathrm{grid}}$, we measure
\begin{equation}
\begin{aligned}
E_2
&=
\frac{
\|u_{\mathrm{FW}}-u_{\mathrm{Nys}}\|_{\ell^2(\Omega_{\mathrm{grid}})}
}{
\|u_{\mathrm{Nys}}\|_{\ell^2(\Omega_{\mathrm{grid}})}
},
\\[4pt]
\mathcal E_{\log}(x)
&=
\log_{10}
\max\!\left\{
\frac{
|u_{\mathrm{FW}}(x)-u_{\mathrm{Nys}}(x)|
}{
\displaystyle
\max_{y\in\Omega_{\mathrm{grid}}}u_{\mathrm{Nys}}(y)
-
\min_{y\in\Omega_{\mathrm{grid}}}u_{\mathrm{Nys}}(y)
},
\,2^{-52}
\right\}.
\end{aligned}
\label{eq:num-spatial-error}
\end{equation}

In the first two examples, the canonical domain has equal weights
$m_j=1/N$ and is given by
\[
|\mathfrak p(w)|>\mu^N,
\qquad
\mathfrak p(w)=\prod_{j=1}^N(w-b_j).
\]
The first two comparisons use a $600\times600$ physical grid, excluding
the inclusions and the buffer
$|\mathfrak p(\Psi^{-1}(z))|<1.08\mu^N$.

\begin{table}[H]

\centering

\begin{tabular}{ccccccc}

\toprule

Case & $N$ & $M$ & $J_G$ & $J$ & $N_{\mathrm{Nys}}$ & $E_2$\\

\midrule

1 & 7 & 48 & 168 & 168 & 512 & $6.300\times10^{-8}$\\

2 & 3 & 40 & 60 & 60 & 512 & $7.568\times10^{-9}$\\

3 & 2 & 1000 & --- & 2001 & 4000 & $2.862\times10^{-5}$\\

\bottomrule

\end{tabular}

\caption{Resolutions and relative total-field discrepancies.
$M$ is the Fourier cutoff, with modes $1\le |k|\le M$ retained on each
boundary component. $J_G$ is the cutoff for the Walsh--Grunsky
expansion, $J$ is the cutoff for the outgoing Faber--Walsh expansion,
and $N_{\mathrm{Nys}}$ is the number of Nystr\"om discretization nodes
on each boundary component.}

\label{tab:num-results}

\end{table}

\paragraph{Seven inclusions with different conductivities.}

The first example has sevenfold rotationally symmetric geometry:
\[
b_j=2e^{2\pi\mathrm{i}(j-1)/7},\qquad
\mu=2(0.75)^{1/7},\qquad
\Psi(w)=w+0.002\,\frac{2^7w}{w^7-2^7}.
\]
The conductivities and incident field are
\[
(\sigma_1,\ldots,\sigma_7)=(0.35,0.6,1.5,2.5,4,7,12),
\qquad
H(x)=x_1.
\]

\begin{figure}[htbp]
\centering
\includegraphics[width=\textwidth]{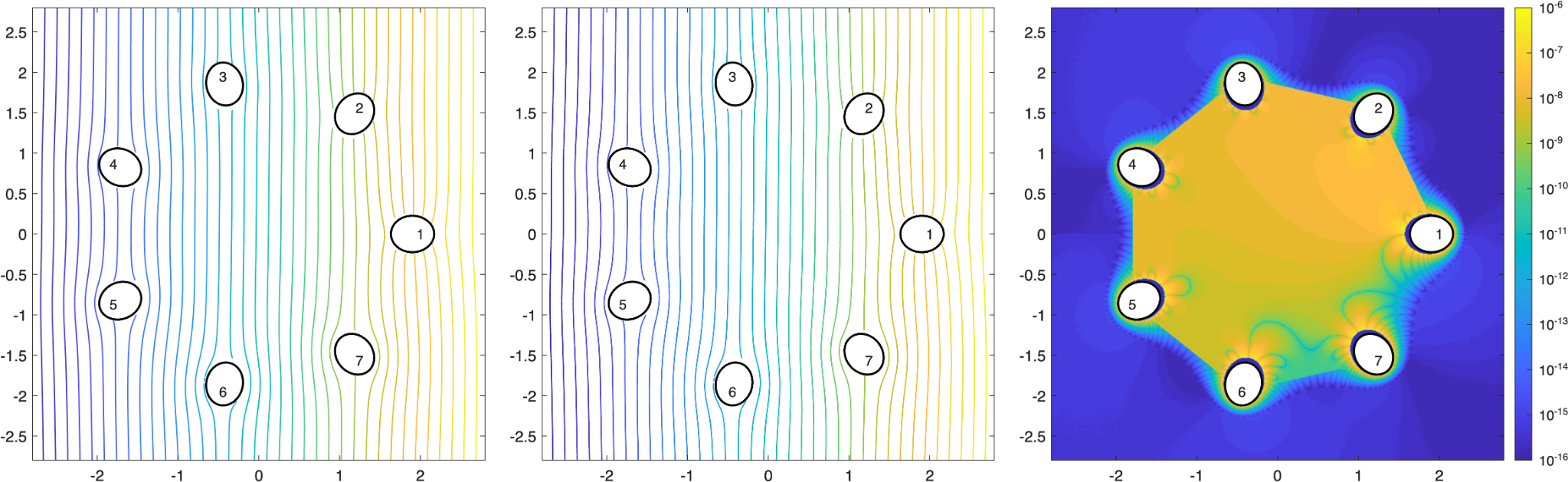}
\caption{Seven rotationally symmetric inclusions with different
conductivities. Left: Nystr\"om field. Center: FW field.
Right: the normalized pointwise error on a logarithmic scale.
The component numbers follow the ordering of $b_j$.}
\label{fig:num-case1}
\end{figure}

\FloatBarrier

\paragraph{Three asymmetric inclusions with a common conductivity.}

The second example uses
\[
(b_1,b_2,b_3)
=
(0,2.5+0.2\mathrm{i},-0.9+1.8\mathrm{i}),
\qquad
\mu=2(0.25)^{1/3},
\]
and
\[
\Psi(w)=w+\frac{a_1}{w},
\qquad
a_1=0.048e^{\mathrm{i}\pi/4},
\qquad
\sigma_1=\sigma_2=\sigma_3=10.
\]
The incident field is $H(x)=x_2$. Unlike the simply connected
case, the single nonzero Laurent coefficient does not make the
Walsh--Grunsky matrix diagonal; in the present ordering,
$g_{21}=-b_2a_1\ne0$.

\begin{figure}[htbp]
\centering
\includegraphics[width=\textwidth]{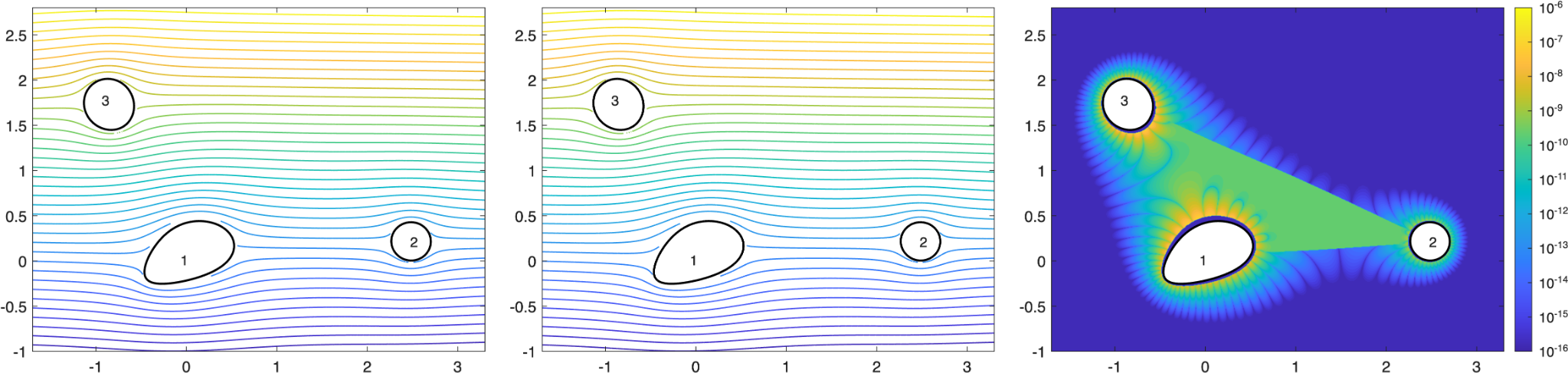}
\caption{Three asymmetric inclusions with conductivity $\sigma_j=10$
and incident field $H(x)=x_2$.
Left: Nystr\"om field. Center: FW field.
Right: the normalized pointwise error on a logarithmic scale.}
\label{fig:num-case2}
\end{figure}

\FloatBarrier

\paragraph{A nearly touching pair.}

The last example takes
\[
\Psi(w)=w,\qquad
b=1.42,\qquad
\mu=1.4195,\qquad
\sigma_1=\sigma_2=0.35.
\]
The inclusions are the bounded components of
$|w^2-b^2|<\mu^2$, with gap
$2\sqrt{b^2-\mu^2}\approx7.536\times10^{-2}$.
The incident field is
$
H(x)=x_1\cos(-30^\circ)+x_2\sin(-30^\circ).
$
Since $F_n=v_n$ and $G=0$, the NP matrix reduces to $K_0$, so this
example isolates the canonical interaction. With $M=1000$, $J=2001$,
and $N_{\mathrm{Nys}}=4000$, the relative discrepancy is
$E_2=2.862\times10^{-5}$.

\begin{figure}[htbp]
\centering
\includegraphics[width=\textwidth]{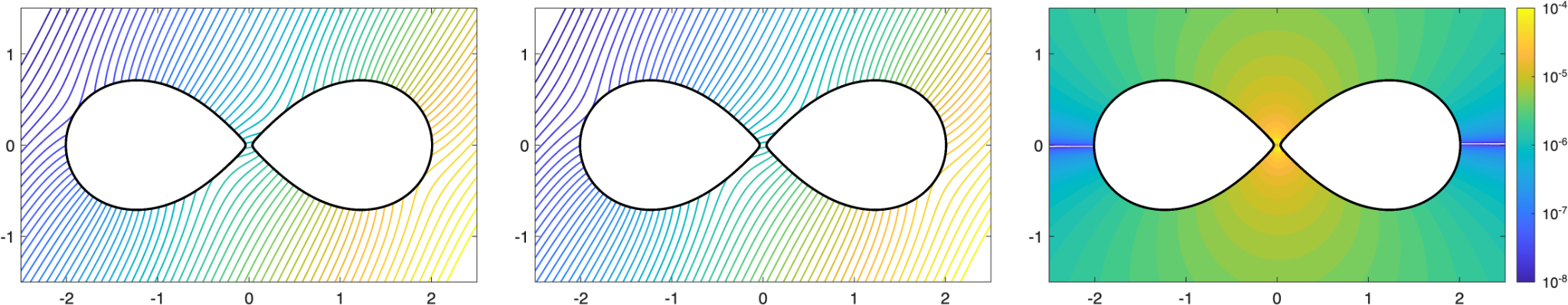}
\caption{Nearly touching inclusions with $\sigma_1=\sigma_2=0.35$.
Left: Nystr\"om field. Center: FW field.
Right: the normalized pointwise error on a logarithmic scale.}
\label{fig:num-case3}
\end{figure}

\FloatBarrier

\section{Conclusion and Future Directions}
\label{sec:conclusion}

The FW expansion gives a lemniscatic-coordinate representation of the
response of multiple conductivity inclusions with componentwise material
parameters. The factorization of the NP operator separates canonical
interaction from conformal deformation. In particular, the canonical term
need not vanish even when the Walsh--Grunsky coefficients are zero. The FWPTs
are expressed through the material resolvent and are related to the CGPTs by a
triangular change of polynomial basis. Thus, the lemniscatic coordinate gives
a representation for the field, the NP operator, and the moment
coefficients. The field expansion and the geometric truncation estimate hold
for $C^{1,\alpha}$ interfaces, whereas the NP factorization requires analytic
interfaces. The truncation estimate uses exact coefficients for fixed geometry
and contrast.

The nearly touching regime also deserves further study. The factorization
obtained here may be useful for analyzing separately the canonical interaction
and the material resolvent as the distance between components tends to zero.
Related questions arise for large conductivity contrast, where the dependence
of the coupled resolvent on the componentwise material parameters may become
especially significant. Other natural extensions include multiple inclusions
with imperfect interfaces, coated components, or less regular boundaries. In
these settings, one may ask which parts of the FW expansion, the NP
factorization, and the FWPT--CGPT relation remain valid.

The FW representation also suggests several inverse problems. Since GPTs and
CGPTs can be obtained from boundary measurements, one may ask whether finitely
many of them determine geometric and material parameters of a prescribed
family of multiple inclusions. The unknowns may include the parameters of the
lemniscatic domain, coefficients of the conformal map, and the
componentwise conductivities. It would also be interesting to determine
whether the number of components can be recovered from finite-order GPT or
CGPT data. The triangular relation between CGPTs and FWPTs gives an alternative
coordinate system for studying these questions. For electrical impedance
tomography, a further direction is to formulate the problem in a bounded
measurement domain and relate the Dirichlet-to-Neumann map to the GPT, CGPT,
and FWPT representations. This would connect the lemniscatic-coordinate
formulation directly with boundary voltage--current measurements and lead to
questions of uniqueness and stability for recovering multiple inclusions from
boundary data.


\end{document}